\documentclass[reqno,11pt]{amsart}

\usepackage{times,amsmath,cancel,stmaryrd,graphicx}
\usepackage{amsfonts,enumitem,amssymb,color,mathrsfs}
\usepackage[colorlinks=true]{hyperref}
\usepackage{cleveref,enumerate}
\usepackage{lmodern}
\usepackage[dvipsnames]{xcolor}
\usepackage{comment}
\usepackage{tikz}
\usetikzlibrary{arrows.meta,calc}
\usepackage{cite}

\usepackage{float}

\newtheorem{thm}{Theorem}[section]
\newtheorem{lem}[thm]{Lemma}
\newtheorem{cor}[thm]{Corollary}
\newtheorem{prop}[thm]{Proposition}

\newtheorem{rem}[thm]{Remark}

\numberwithin{equation}{section}

\newcommand{\aequation}{\renewcommand{\theequation}{\mbox{A.\arabic{equation}}}}

\newcommand{\nequation}{\setcounter{equation}{0}}

\newcommand{\R}{\mathbb{R}}

\newcommand{\E}{\mathbb{E}}

\newcommand{\ve}{\varepsilon}
\newcommand{\rd}{\mathrm{d}}

\newcommand{\dhr}{\mathrel{\lhook\joinrel\relbar\kern-.8ex\joinrel\lhook\joinrel\rightarrow}}

\allowdisplaybreaks

\begin{document}

\title[A CSTR Biofilm Model]{Analysis of a Biofilm Model in a Continuously Stirred Tank Reactor with Wall Attachment}


%
\author{Katerina Nik}
\address{King Abdullah University of Science and Technology (KAUST)\\
CEMSE Division\\
Thuwal 23955-6900\\
Saudi Arabia}
\email{katerina.nik@kaust.edu.sa}

\author{Christoph Walker}
\address{Leibniz Universit\"at Hannover\\
Institut f\"ur Angewandte Mathematik\\
Welfengarten 1\\
30167 Hannover\\
Germany}
\email{walker@ifam.uni-hannover.de}
\date{\today}

\begin{abstract}
We investigate a mathematical model for a bacterial population in a continuously stirred tank reactor with wall attachment. The model couples a free-boundary value problem for substrate diffusion in the one-dimensional biofilm
with a system of nonlinear ODEs for biofilm thickness, suspended biomass, and free substrate concentration. We establish global well-posedness and analyze the long-term dynamics. In particular, we characterize the local and global stability of the trivial (washout) equilibrium, prove the existence of a nontrivial equilibrium, and, under additional structural assumptions, establish its uniqueness and derive conditions for its local stability.
\end{abstract}
%
\keywords{Biofilm, well-posedness, equilibria, stability}
\subjclass[2020]{37N25,92D25,34C11}
\maketitle
\section{Introduction}

Microbial communities have become a central topic of interdisciplinary research due to their fundamental role in natural, medical, and industrial processes. In many environments, microorganisms do not persist as isolated planktonic cells but instead form spatially structured aggregates, commonly referred to as biofilms. On submerged surfaces in aqueous systems, cells may attach to the boundary, transition to a sessile state, and develop layered structures.

In numerous engineered systems, such as continuously stirred tank reactors (CSTRs), suspended and surface-associated microbial populations coexist. These compartments are dynamically coupled: planktonic cells may attach to an established biofilm, while sessile cells may detach and re-enter the bulk phase. Although attachment and detachment processes are known to play a crucial role in microbial population dynamics, the underlying mechanisms remain only partially understood.

Despite extensive experimental investigations and application-oriented modeling efforts, comparatively little attention has been devoted to the systematic derivation and rigorous mathematical analysis of models capturing the coupled dynamics of suspended and attached populations. Motivated by~\cite{MasicEberl12}, we study a mathematical model describing a bacterial population in a continuously stirred tank reactor with wall attachment.
The model extends the classical Freter model~\cite{Freter,Jones03,BallykJonesSmith,Stemmons} by incorporating additional dynamical features. In particular, the sessile population is represented by a one-dimensional biofilm whose evolution follows established biofilm modeling frameworks~\cite{Jones03,WannerGujer86,WannerEtAl06,KD10}. The reactor is assumed to be continuously supplied with fresh medium. The biofilm uniformly covers the available surface for colonization, and growth-limiting substrate diffuses into the biofilm layer, where spatial gradients arise as a consequence of diffusion and microbial consumption.

More precisely, the model proposed in~\cite{MasicEberl12} is of the form
\begin{subequations}\label{C}
\begin{align}
\kappa\partial_z^2 c&=r(c)\,, &&0<z<h(t)\,, \,\quad t>0\,, \label{C1}\\
\partial_z c(0)&=0\,, \quad  c\big(h(t)\big)=S(t)\,, && t>0\,,\label{C2}\\
h'&=\int_0^{h(t)} g\big(r(c(t,z))\big)\,\rd z +\frac{\alpha}{\beta} Q(t)-d(h)h\,,&& t>0\,, \label{C3}\\
S'&=D(S^*-S)-k_1Q(t)\nu(S)-\rho\partial_zc\big(t,h(t)\big)\,, && t>0\,,\label{C4}\\
Q'&= \big(\nu(S(t))-D-k_Q\big)Q+\beta d\big(h(t)\big)h(t)-\alpha Q\,, && t>0\,,\label{C5}
\end{align}
subject to the nonnegative initial conditions
\begin{align}
h(0)&=h_0\,,\quad S(0)=S_0\,,\quad Q(0)=Q_0 \label{c7}\,,
\end{align}
\end{subequations}
for the unknown substrate concentration $c=c(t,z)$ in the biofilm at thickness $z$, the biofilm thickness $h=h(t)$, the free substrate concentration $S=S(t)$ in the bulk,  and the suspended biomass $Q=Q(t)$ in the bulk.

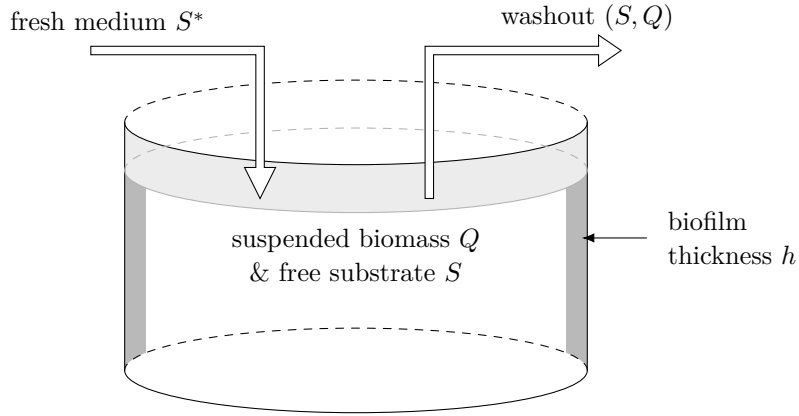
\begin{figure}[h]
\centering\hspace*{1.2cm}
\begin{tikzpicture}[line cap=round,line join=round,>=Latex,scale=0.75]

\def\W{8.2}          
\def\H{4.4}          
\def\E{0.75}         
\def\level{0.85}     
\def\film{0.38}      

\pgfmathsetmacro{\yTop}{\H}
\pgfmathsetmacro{\yBot}{0}
\pgfmathsetmacro{\yLvl}{\H-\level}

\def\CylinderPath{%
  (-\W/2,\yTop)
    arc[start angle=180,end angle=360,x radius=\W/2,y radius=\E]
  -- (\W/2,\yBot)
    arc[start angle=0,end angle=180,x radius=\W/2,y radius=\E]
  -- cycle
}

\begin{scope}
  \clip \CylinderPath;

  \path[fill=gray!55]
    (-\W/2,\yBot-2*\E) rectangle (-\W/2+\film,\yTop+2*\E);
  \path[fill=gray!55]
    (\W/2-\film,\yBot-2*\E) rectangle (\W/2,\yTop+2*\E);

  \path[fill=gray!15]
    (-\W/2,\yLvl)
      arc[start angle=180,end angle=360,x radius=\W/2,y radius=\E]
    -- (\W/2,\yTop)
      arc[start angle=0,end angle=180,x radius=\W/2,y radius=\E]
    -- cycle;

\end{scope}

\draw (-\W/2,\yBot) -- (-\W/2,\yTop);
\draw ( \W/2,\yBot) -- ( \W/2,\yTop);

\draw (-\W/2,\yBot) arc[start angle=180,end angle=360,x radius=\W/2,y radius=\E];
\draw[dashed] (\W/2,\yBot) arc[start angle=0,end angle=180,x radius=\W/2,y radius=\E];

\draw (-\W/2,\yTop) arc[start angle=180,end angle=360,x radius=\W/2,y radius=\E];
\draw[dashed] (\W/2,\yTop) arc[start angle=0,end angle=180,x radius=\W/2,y radius=\E];

\draw[gray!60] (-\W/2,\yLvl) arc[start angle=180,end angle=360,x radius=\W/2,y radius=\E];
\draw[gray!60,dashed] (\W/2,\yLvl) arc[start angle=0,end angle=180,x radius=\W/2,y radius=\E];


\node[anchor=south] at (-4.4,\yTop+1.45) {\small{fresh medium $S^*$}};
\draw[fill=white, draw=black, line join=miter]
   (-4.7, \yTop+1.35) --        
  (-1.625, \yTop+1.35) --      
  (-1.625, \yLvl+0.05) --
  (-1.425, \yLvl+0.05) --
  (-1.7, \yLvl-0.50) --
  (-1.975, \yLvl+0.05) --
  (-1.775, \yLvl+0.05) --
  (-1.775, \yTop+1.20) --      
  (-4.7, \yTop+1.20) -- cycle; 

\node[anchor=south] at (4.1,\yTop+1.45) {\small{washout ($S, Q$)}};
\draw[fill=white, draw=black, line join=miter]
   (1.375, \yLvl-0.50) --
  (1.375, \yTop+1.20) --       
  (4.3, \yTop+1.20) --         
  (4.3, \yTop+1.00) --         
  (4.7, \yTop+1.275) --        
  (4.3, \yTop+1.55) --         
  (4.3, \yTop+1.35) --         
  (1.225, \yTop+1.35) --       
  (1.225, \yLvl-0.50) -- cycle;
\node[align=center] at (0,2.05) {\small{suspended biomass $Q$} \\ \small{\& free substrate $S$}};

\node[anchor=west,align=left] at (\W/2+1.25,2.35) {\small{biofilm}\\\small{thickness $h$}};
\draw[-Latex] (\W/2+1.05,2.35) -- (\W/2-0.10,2.35);

\end{tikzpicture}
\caption{Schematic view of a CSTR.}
\label{fig:biofilm-reactor}
\end{figure}

The equations reflect that fresh medium is supplied at an inlet substrate concentration $S^*>0$. 
Washout of substrate and suspended biomass is described by a dilution rate $D>0$. 
The constant $k_Q>0$ denotes the death rate of suspended bacteria and  the constant $k_1=1/V\gamma>0$ involves the reactor volume $V$ and the biomass yield per unit of substrate~$\gamma$. 
Furthermore, $\alpha>0$ and $d=d(h)$ represent the attachment rate of suspended cells to the biofilm and the detachment rate from the wall, respectively.
Biomass growth due to substrate consumption is modeled by substrate-dependent growth functions 
$r=r(c)$ for the biofilm population and $\nu=\nu(S)$ for the suspended population. 
The term 
$\rho \partial_z c\big(t,h(t)\big)$ denotes the substrate flux from the aqueous phase into the biofilm, where $\rho>0$ is the biomass density within the biofilm. 
The constant $\beta$ is defined as the product of $\rho$ and the colonizable surface area $A>0$.
Moreover,
\[
\int_0^{h(t)} g\big(r(c(t,z))\big)\,\rd z
\]
describes the growth-induced velocity of the biomass at the moving boundary $z=h(t)$ of the biofilm.  
In~\cite{MasicEberl12}, the substrate-dependent growth functions $r=r(c)$ and $\nu=\nu(S)$ are given as Monod kinetics of the form $r(c)=r_m c/(r_0+c)$ and similarly for $\nu(S$), while the wall detachment rate $d(h)=d_0h$ is linear and the function $g$ is taken as $g(r)=r-b$
with $b>0$ denoting the death rate of biofilm bacteria. Here, we allow for more general functional forms. We refer to~\cite{MasicEberl12,MasicEberl_16} for more details on this particular model and, e.g., to~\cite{BallykJonesSmith,Gaebler21,GaeblerHughesEberl_BMB_21,Jones03,ME14,ME14b,KD10,Stemmons} and the references therein for extended or related models.\\

In~\cite{MasicEberl12}, the well-posedness of~\eqref{C} was established (for slightly less general data), and conditions for the local (in-)stability of the trivial (washout) equilibrium were derived. Moreover, insightful numerical simulations were presented which reveal the existence of a nontrivial equilibrium when the washout equilibrium is unstable and show  persistence of solutions. In addition, structural relationships between model parameters and dependent variables were identified.

The aim of the present work is to provide a more detailed, rigorous mathematical analysis of the model with respect to well-posedness and long-term dynamics. In particular, we provide conditions for the global stability of the washout equilibrium. We also prove the existence of a nontrivial equilibrium within the parameter regime where the washout equilibrium is unstable. Moreover, we provide conditions that guarantee the uniqueness of the nontrivial equilibrium and its local stability. The uniqueness and stability analysis of this equilibrium require careful and nontrivial arguments. It is worth pointing out that even for the algebraically simpler Freter model, the analysis of nontrivial equilibria remains challenging~\cite{Jones03,Stemmons}, and uniqueness can be obtained only under additional assumptions to the best of our knowledge.  Our approach is inspired by~\cite{GW_QAM}, which investigates a related but simpler one-dimensional biofilm model coupling only the biofilm thickness $h$ and the substrate concentration $c$.

In Section~\ref{Sec2}, we establish well-posedness of~\eqref{C} (see Theorem~\ref{T1}) and derive further qualitative properties of solutions needed for the subsequent analysis. In Section~\ref{Sec3}, we first show that the omega-limit set of any (nonnegative) initial value is nonempty (see Proposition~\ref{PX}) and then investigate local (Proposition~\ref{P2}) and global (Corollary~\ref{C10}) stability properties of the trivial (washout) equilibrium. Finally, in Section~\ref{Sec4}, we establish a general existence result for nontrivial equilibria (Theorem~\ref{P5}). While the results obtained so far depend mainly on the magnitude of the inlet $S^*$, we then impose additional structural assumptions on the parameters and employ a shooting argument to prove uniqueness for the nontrivial equilibrium (Theorem~\ref{T5}). Finally, we derive conditions guaranteeing local stability of the (unique) nontrivial equilibrium (Theorem~\ref{T6}). Part of the stability analysis is deferred to Appendix~\ref{App}. In Appendix~\ref{AppN} we include some basic numerical simulations.

\medskip

Throughout the remainder, we set  $$k_2:=D+k_Q>0$$
and assume that
\begin{subequations}\label{GA}
\begin{equation}\label{G0}
g\in {\rm C}^{1-}(\R)\, , 
\end{equation}
where ${\rm C}^{1-}$ refers to locally Lipschitz continuity, that
\begin{equation}\label{G1}r\in
{\rm C}^{1}(\R)\ \text{  is nondecreasing with } sr(s)> 0 \text{ for }\ s\not= 0\,,
\end{equation}
and that
\begin{equation}\label{G2}
\nu,d\in {\rm C}^{1-}(\R) \ \text{ with }\ \nu(s), d(s)>  0\ \text{ for }\ s>0  \ \text{ and }\ \nu(0)=0\,.
\end{equation}
\end{subequations}
In fact, it suffices if the functions are only defined for nonnegative arguments. We shall impose further assumptions when needed.

\section{Well-Posedness}\label{Sec2}

We prove the global well-posedness of system~\eqref{C} and derive additional properties of the solution for later use. As for the well-posedness of \eqref{C}, we temporarily neglect the time dependence and introduce dimensionless variables
\begin{align}\label{dvar}
y=\frac{z}{h}\quad \text{ and }\quad u(y)=c\bigl(yh\bigr),
\end{align}
so that for fixed time,  problem \eqref{C1}-\eqref{C2} for the biomass substrate $c$ on the varying domain $[0,h]$ is equivalent to the
following (fixed) boundary value problem
\begin{subequations}
\label{U}
\begin{align}
\kappa \partial_{y}^2u&= h^2r(u)\,, \quad 0<y<1\,,\quad \label{BBB2}\\
\partial_{y}u(0)&=0\,,\quad u(1)=S\,, \label{BBB4}
\end{align}
\end{subequations}
that we analyze first: 

\begin{prop}\label{P1}
Assume \eqref{GA}. Given $h,S\in\R$, there exists a
unique solution $$u=u[h,S]\in {\rm C}^3 \bigl( [0,1]\bigr)$$ of the
boundary value problem~\eqref{U}.
It holds that
\begin{align}\label{u}
  u(y)=S-\frac{h^2}{\kappa}
  \int_y^1\int_0^\eta r \bigl( u(\xi)\bigr)\,\rd\xi \,\rd \eta,\quad y\in [0,1]\,,
\end{align}
and
$$
\Bigl[ (h,S)\mapsto u[h,S]\Bigr]\in {\rm C}^1\big(\R^2,
{\rm C}^2([0,1])\big)\, .
$$
Moreover, if $S>0$, then 
\begin{subequations}\label{ccc}
\begin{align}
&0< u(0)\le u(y)\le S\,,\quad  y\in [0,1]\,,\label{ccc1}\\
&0\le \partial_y u(y)\le \frac{h^2}{\kappa}r(u(y))\le \frac{h^2}{\kappa}r(S)\,,\quad 
y\in [0,1]\,,\label{ccc2}\\
&0\le \partial_{y}^2u(y)\le \frac{h^2}{\kappa}r(S)\,,\quad  y\in [0,1]\,. \label{ccc3}
\end{align}
\end{subequations}
\end{prop}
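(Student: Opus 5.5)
The plan is a shooting argument in the value at $y=0$, followed by the implicit function theorem. Since $r$ is nondecreasing and $sr(s)>0$ for $s\neq0$, continuity forces $r(0)=0$. For $a\in\R$ let $v(\cdot;a,h)$ solve the initial value problem
\[
\kappa v''=h^2r(v)\,,\qquad v(0)=a\,,\quad v'(0)=0\,.
\]
Because $r\in{\rm C}^1$, the local solution is unique and ${\rm C}^1$ in $(a,h)$.

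\emph{Global existence of $v$.} If $a\ge0$, then $v\ge a$ stays monotone, and $r(v)\le r(\max_{[0,1]}v)$ gives a Gronwall-type bound on $[0,1]$ that rules out blow-up. Indeed, $v''$ is bounded by $h^2/\kappa$ times a continuous function of $v$, and comparison with the linear growth of $r$ on bounded sets gives this. The safer route is to cap $r$ outside a large ball, solve, and then show a posteriori that the cap is never reached. I would first try the a priori bound via the energy identity
\[
\tfrac{\kappa}{2}(v')^2=h^2\bigl(R(v)-R(a)\bigr)\,,\qquad R'=r\,.
\]
This gives $v'=\frac{h}{\sqrt\kappa}\sqrt{2(R(v)-R(a))}$. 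The quantity $\int \rd v/\sqrt{R(v)-R(a)}$ might be finite for superlinear $r$, so blow-up before $y=1$ is a genuine concern. I would handle it by solving the boundary value problem directly as a fixed point instead, as follows.

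\emph{Direct fixed-point formulation.} Write $u=S+w$. The formula \eqref{u} is the integrated form of \eqref{U}. By a maximum principle argument (with $u'(0)=0$ and $u''$ having the sign of $u$), any solution satisfies $|u|\le|S|$: at an interior maximum with $u>0$ we get $u''>0$, a contradiction, and the boundary point $y=0$ is excluded via the Hopf-type/integral identity $u'(y)=\frac{h^2}{\kappa}\int_0^y r(u)$. So I replace $r$ by $\tilde r(s)=r\bigl(\max\{-|S|,\min\{s,|S|\}\}\bigr)$, which is bounded and Lipschitz. Schauder's theorem applied to the compact map given by the right-hand side of \eqref{u} on ${\rm C}([0,1])$ gives a solution; it satisfies $|u|\le|S|$ and hence solves the original problem.

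\emph{Uniqueness.} If $u_1,u_2$ are two solutions, set $w=u_1-u_2$. Then $\kappa w''=h^2(r(u_1)-r(u_2))$, and since $r$ is monotone, $ww''\ge0$. Together with $w'(0)=0$ and $w(1)=0$, integrating $ww''$ gives $-\int_0^1 (w')^2\ge0$, so $w$ is constant and therefore $w\equiv0$. Regularity follows by bootstrapping: $u\in{\rm C}^2$ from \eqref{u}, and then $u''=h^2r(u)/\kappa\in{\rm C}^1$ gives $u\in{\rm C}^3$.

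\emph{${\rm C}^1$ dependence.} Define $F(h,S,u)=\kappa u''-h^2r(u)$ together with the boundary conditions, as a ${\rm C}^1$ map $\R^2\times{\rm C}^2_B\to{\rm C}([0,1])\times\R$. Its $u$-derivative is $\phi\mapsto \kappa\phi''-h^2r'(u)\phi$ with $\phi'(0)=0$ and $\phi(1)=0$. Since $r'\ge0$, the same energy argument shows this operator is injective; it is Fredholm of index zero, so it is an isomorphism. The implicit function theorem combined with uniqueness then gives $(h,S)\mapsto u[h,S]\in{\rm C}^1(\R^2,{\rm C}^2([0,1]))$. This step requires $r\in{\rm C}^1$, which is why \eqref{G1} asks for it. I expect the Fredholm and isomorphism verification to be the main technical point, though it is standard.

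\emph{Estimates for $S>0$.} From $u'(y)=\frac{h^2}{\kappa}\int_0^y r(u)$, it suffices to show $u>0$. Suppose $u(0)\le0$. Then $u''\le0$ near $0$, and as long as $u\le0$ we have $u'\le0$, so $u$ stays $\le u(0)\le0$ up to $y=1$, contradicting $u(1)=S>0$. Hence $u(0)>0$. Then $u$ is nondecreasing, which gives \eqref{ccc1}. Next, $0\le u'(y)\le\frac{h^2}{\kappa}y\,r(u(y))\le\frac{h^2}{\kappa}r(u(y))\le\frac{h^2}{\kappa}r(S)$ by monotonicity of $r$ and $u$, which is \eqref{ccc2}. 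Finally, $u''=\frac{h^2}{\kappa}r(u)\in\bigl[0,\frac{h^2}{\kappa}r(S)\bigr]$, which is \eqref{ccc3}.
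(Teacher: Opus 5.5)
Once the shooting preamble is discarded, your argument has the same architecture as the paper's proof. It uses a fixed-point theorem for the integral operator in \eqref{u}, the energy identity for uniqueness, and the implicit function theorem with an injective Fredholm-index-zero linearization (injective because $r'\ge 0$). The differences are technical. The paper applies Schaefer's theorem with $r$ itself and obtains the a priori bound for solutions of $u=\lambda F(u)$ by testing with $u_\pm$. You instead truncate $r$ at level $|S|$, apply plain Schauder on a ball, and remove the truncation with a pointwise maximum principle. That works because $\tilde r$ keeps the property $s\tilde r(s)>0$ for $s\neq 0$ (if $S\neq 0$), so your maximum-principle argument applies to the truncated equation. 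State this explicitly, and dispose of $h=0$ or $S=0$ separately: there $u\equiv S$, and ``$u''>0$ at an interior maximum'' is unavailable. Your operator $(h,S,u)\mapsto(\kappa u''-h^2 r(u),\,u(1)-S)$ on $\{u\in{\rm C}^2([0,1]):\partial_y u(0)=0\}$ is equivalent to the paper's $-1-K$ on ${\rm C}^2([0,1])$.

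There is one genuine hole, in the positivity step for $S>0$. The sentence ``Suppose $u(0)\le 0$. Then $u''\le 0$ near $0$'' follows by continuity only if $u(0)<0$. If $u(0)=0$, nothing you have said prevents $u$ from becoming positive immediately. The missing ingredient is uniqueness for the initial value problem. The data $u(0)=\partial_y u(0)=0$, together with $r(0)=0$ and the local Lipschitz continuity of $r$ (from $r\in{\rm C}^1$), force $u\equiv 0$, which contradicts $u(1)=S>0$. The strict inequality $0<u(0)$ in \eqref{ccc1} rests on exactly this. The paper's step (iii) only records $0\le u\le S$ and leaves this implicit; the same uniqueness device appears in Corollary~\ref{Cor1}(a).

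Finally, remove the first paragraph rather than presenting it as ``the plan''. Its claim that a Gronwall-type bound rules out blow-up for $a\ge 0$ is circular, since it invokes $\max_{[0,1]}v$. It is also false for admissible superlinear $r$, e.g.\ $r(s)=s^3$, where your own energy identity gives blow-up before $y=1$ once $h^2a^2/\kappa$ is large.
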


\begin{proof}
{\bf (i)} Given $h,S\in\R$,  we define
\begin{equation}\label{F}
F(u)(y):=S-\frac{h^2}{\kappa}
  \int_y^1\int_0^\eta  r \bigl( u(\xi)\bigr)\,\rd\xi \,\rd \eta\,,\quad y\in [0,1]\,,
\end{equation}
for $u\in {\rm C}([0,1])$. The
function $F$ clearly satisfies
$$
F: {\rm C}([0,1])\rightarrow {\rm C}([0,1])
$$
since $ r$ is
continuous. Thanks to
\begin{equation}\label{i97}
\partial_y F(u)(y)=\frac{h^2}{\kappa} \int_0^y  r\bigl(
u(\xi)\bigr)\,\rd \xi,\quad y\in [0,1],
\end{equation}
for  $u\in {\rm C}([0,1])$ and since $r$ is bounded on bounded sets,
the Arzel\`a-Ascoli Theorem ensures that the mapping $F:
{\rm C}([0,1])\rightarrow {\rm C}([0,1])$ is also
compact. Consider $\lambda\in [0,1]$ and $u\in {\rm C}([0,1])$ such that $u=\lambda F(u)$. Then $u\in {\rm C}^2([0,1])$ satisfies
\begin{align*}
\kappa \partial_{y}^2u&= \lambda h^2r(u)\,,  \quad 0<y<1\,,\\
\partial_{y}u(0)&=0\,,\quad u(1)=\lambda S\,.
\end{align*}
For $u_+=\max\{u,0\}$ and $u_-=\min\{u,0\}$ it follows from \eqref{G1} that $u_\pm r(u)\ge 0$ in $[0,1]$ and thus
\begin{align*}
0\le \lambda h^2\int_0^1u_\pm(y) r(u(y))\,\rd y&=-\kappa\int_0^1\vert \partial_y u_\pm(y)\vert^2\,\rd y+\kappa u_\pm(y)\partial_yu(y)\big\vert_{y=0}^{y=1}\\
&=-\kappa\int_0^1\vert \partial_y u_\pm\vert^2\,\rd y+\kappa u_\pm(1)\partial_yu(1)\,.
\end{align*} 
If $S\ge 0$, then $u_-(1)=0$ and thus $\|\partial_y u_-\|_2=0$ so that $u_-\equiv 0$. Hence $u\ge 0$ in $[0,1]$. Since then $r(u)\ge 0$ in $[0,1]$ by \eqref{G1}, we infer from $u=\lambda F(u)$ and \eqref{F} that $0\le u\le S$ in $[0,1]$. Analogously, if $S< 0$, then $u_+(1)=0$ and thus  $u_+\equiv 0$; that is, $u\le 0$ in $[0,1]$. Since then $r(u)\le 0$ in $[0,1]$, we deduce from \eqref{F} that $S\le u\le 0$ in $[0,1]$. Consequently, the set
$$
\left\{u\in C([0,1])\, \big |\,  \text{there is } \lambda\in [0,1] \text { with }
u=\lambda F(u)\right\} 
$$
is bounded. Consequently, Sch\"afer's Fixed Point Theorem \cite[Theorem~11.3]{GT} entails that
there is $u\in {\rm C}([0,1])$ with $u=F(u)$. In fact, 
we also have that $u\in {\rm C}^3([0,1])$ (since $r$ is ${\rm C}^1$) satisfies~\eqref{U}.
As for uniqueness, take two solutions $u_1$ and $u_2$ of
\eqref{U}. Then $w:=u_1-u_2$ satisfies
$$
\kappa\partial_{y}^2w= h^2\big(r(u_1)-r(u_2)\big), \quad y\in [0,1],
$$
and
$$
\partial_y w(0)=0,\quad w(1)=0. 
$$
Multiplying the first equation with $w$ and integrating by parts
gives
\begin{align*}
0&\le h^2 \int_0^1\Big[r\bigl(u_1(y)\bigr)-r\bigl(u_2(y)\bigr)\Big]\Big[u_1(y)-u_2(y)\Big]\,\rd y= -\kappa \int_0^1 \vert\partial_yw(y)\vert^2\,\rd y
\end{align*}
using the boundary conditions and the monotonicity of $r$. It follows
that $w\equiv 0$ on $[0,1]$.\\[.1cm]

{\bf (ii)} To prove the continuous dependence of the
solution ~$u=u[h,S]$ on $(h,S)$, we define
$$
G\in {\rm C}^1\left(\R^2\times {\rm C}^2([0,1]),{\rm C}^2([0,1])\right)
$$ 
by
\begin{equation*}
G(h,S,v)(y):=S-\frac{h^2}{\kappa}
  \int_y^1\int_0^\eta r \bigl( v(\rho)\bigr)\,\rd\rho \,\rd \eta-v(y)\,,\quad y\in [0,1]\,,
\end{equation*}
for $(h,S)\in\R^2$ and $v\in {\rm C}^2([0,1])$. Then, for $(h,S)\in\R^2$ fixed,
$$
G\bigl(h,S,u[h,S]\bigr)=0
$$ 
and its Fr\'echet derivative with respect to $v$ is given by $D_v
G\bigl(h,S,u[h,S]\bigr)=-1-K$, where
\begin{align*}
(K\phi) (y):&=\frac{h^2}{\kappa}
  \int_y^1\int_0^\eta r' \bigl( u[h,S](\xi)\bigr) \phi(\xi)\,\rd\xi \,\rd \eta\,,\quad y\in [0,1]\,,
\end{align*}
for $\phi\in
{\rm C}^2([0,1])$. Since
$$
\partial_y (K\phi) (y)=-\frac{h^2}{\kappa} \int_0^y  r'\bigl(
u[h,S](\xi)\bigr)\phi(\xi)\,\rd \xi\,,\qquad 
\partial_y^2 (K\phi) (y)=-\frac{h^2}{\kappa}  r'\bigl(
u[h,S](y)\bigr)\phi(y)\,,
$$
the Arzel\`a-Ascoli Theorem ensures, as in {\bf (i)}, that $K\in
\mathcal{L}({\rm C}^2([0,1]))$ is a compact operator, hence
$D_v G(h,S,u[h,S]) =-1- K$ is a Fredholm operator. If $\phi\in \mathrm{ker}
\bigl(D_v G\bigl(h,S,u[h,S]\bigr)\bigr)$, then $\phi\in
{\rm C}^2([0,1])$ satisfies
\begin{align*}
\kappa\partial_y^2\phi(y)&= h^2r'\bigl(u[h,S](y)\bigr)\phi(y)\,, \quad y\in [0,1]\,,\\
\partial_y\phi(0)&=0,\quad \phi(1)=0\,.
\end{align*}
Since $r'\bigl(u[h,S](y)\bigr )\ge 0$ for $y\in [0,1]$, it follows as
in the uniqueness proof that $\phi\equiv 0$ on~$[0,1]$. This means
that $D_v G\bigl( h,u[h]\bigr)$ is an automorphism on
${\rm C}^2([0,1])$ for $h,S\in\R^2$. Since $G\bigl(
h,S,u[h,S]\bigr)=0$ for $h,S\in\R^2$, the Implicit Function Theorem now
ensures that indeed
$$
\big[ (h,S)\mapsto u[h,S] \big]\in {\rm C}^1\bigl(\R^2,
{\rm C}^2([0,1])\big)\,,
$$

{\bf (iii)}  Finally, we have already observed that if $S>0$, then $0\le u\le S$ in $[0,1]$. This also implies~\eqref{ccc2} and~\eqref{ccc3}  since $r'\ge 0$.  
\end{proof}

For later use we note:

\begin{cor}\label{Cor1}
Assume \eqref{GA} and let $(h,S)\in [0,\infty)^2$.

{\bf (a)} The function $y\mapsto \partial_S u[h,S](y)$  is nondecreasing with respect to  $y\in [0,1]$ and
\begin{equation} \label{w1}
0<\partial_S u[h,S](0)\le \partial_S u[h,S](y)\le 1\,,\quad y\in [0,1]\,.
\end{equation}
Moreover, it holds that
$$
\partial_S\partial_y u[h,S](y)=\partial_y\partial_S u[h,S](y)\ge 0\,,\quad y\in [0,1]\,.
$$
{\bf (b)} It holds that $\partial_h u[h,S](y)\le 0$ for $y\in[0,1]$. In fact, $\partial_h u[0,S](y)=0$ for $y\in[0,1]$ and in particular $\partial_h\partial_y u[0,S](y)=0$ for $y\in[0,1]$.
\end{cor}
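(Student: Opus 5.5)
We will differentiate the boundary value problem \eqref{U} with respect to the parameters, which is legitimate because $(h,S)\mapsto u[h,S]$ is ${\rm C}^1$ into ${\rm C}^2([0,1])$ by Proposition~\ref{P1}. The derivatives then satisfy linear problems, and we read off signs with a maximum principle.

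\textbf{Part (a).} Put $w:=\partial_S u[h,S]$. Differentiating \eqref{U} in $S$ gives
\begin{align*}
\kappa\partial_y^2 w = h^2 r'\bigl(u[h,S]\bigr)\, w\,,\quad 0<y<1\,,\qquad \partial_y w(0)=0\,,\quad w(1)=1\,,
\end{align*}
with $q:=h^2r'(u)\ge 0$ by \eqref{G1}.

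\emph{Step 1: $w\ge 0$.} Test the equation with $w_-=\min\{w,0\}$, exactly as in the proof of Proposition~\ref{P1}. Since $w_-(1)=0$, we get $\kappa\|\partial_y w_-\|_2^2=-\int_0^1 q\,w\,w_-\,\rd y\le 0$, so $w_-\equiv 0$.

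\emph{Step 2: monotonicity.} From $w\ge 0$ we get $\partial_y^2 w\ge 0$. Together with $\partial_y w(0)=0$ this gives $\partial_y w(y)=\frac1\kappa\int_0^y q w\,\rd\xi\ge 0$. Hence $w$ is nondecreasing and $w(y)\le w(1)=1$. The mixed derivatives commute because $u\in{\rm C}^1(\R^2,{\rm C}^2)$, so $\partial_S\partial_y u=\partial_y w\ge 0$.

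\emph{Step 3: strict positivity at $0$.} This is the main subtlety. Suppose $w(0)=0$. Then $w$ solves the linear ODE $\kappa w''=qw$ with $w(0)=0$ and $w'(0)=0$. By uniqueness for linear ODEs with continuous coefficient $q$, this forces $w\equiv 0$, contradicting $w(1)=1$. Alternatively, one can use the integral form $w(y)=w(0)+\frac1\kappa\int_0^y\int_0^\eta q w$ together with Gronwall's inequality. Hence $w(0)>0$.

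\textbf{Part (b).} Put $v:=\partial_h u[h,S]$. Differentiating \eqref{U} in $h$ gives
\[
\kappa\partial_y^2 v = q\,v + 2h\, r(u)\,,\qquad \partial_y v(0)=0\,,\quad v(1)=0\,.
\]
For $h\ge 0$ and $S\ge 0$ we have $u\ge 0$ by Proposition~\ref{P1}, so $2h\,r(u)\ge 0$. Test with $v_+$; since $v_+(1)=0$,
\[
\kappa\|\partial_y v_+\|_2^2=-\int_0^1\bigl(q\,v\,v_+ + 2h\,r(u)\,v_+\bigr)\,\rd y\le 0\,.
\]
Thus $v_+\equiv 0$, i.e. $v\le 0$.

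Now take $h=0$. The source term vanishes, so $v$ solves $\kappa v''=0$ with $v'(0)=0$ and $v(1)=0$ (here $q=0$ as well). Hence $v\equiv 0$, and consequently $\partial_y v\equiv 0$. Directly, $u[h,S]=S-\frac{h^2}{\kappa}\cdot\mathcal O(1)$ is even in $h$, which gives the same conclusion.

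Finally, $\partial_h\partial_y u=\partial_y\partial_h u$ by the ${\rm C}^1$ regularity into ${\rm C}^2$ from Proposition~\ref{P1}, so $\partial_h\partial_y u[0,S]\equiv 0$.
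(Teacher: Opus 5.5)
Your proposal is correct and takes essentially the same route as the paper. You differentiate \eqref{U} in $S$ and $h$, test the linearized problems with $w_-$ and $v_+$ to obtain the signs, get monotonicity from $\partial_y^2 w\ge 0$ together with $\partial_y w(0)=0$, and exclude $w(0)=0$ by uniqueness for the initial value problem with $w(0)=\partial_y w(0)=0$. The case $h=0$ in (b) is handled identically; your evenness-in-$h$ remark is only an optional alternative.
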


\begin{proof}
Let $u=u[h,S]$.

{\bf (a)} Setting $w(y):=\partial_S u[h,S](y)$ for $y\in [0,1]$, it follows from~\eqref{U} that $w\in {\rm C}^2([0,1])$ satisfies
\begin{align*}
\kappa \partial_{y}^2 w&= h^2r'(u(y))w\,, \quad 0<y<1\,,\\
\partial_{y}w(0)&=0\,,\quad w(1)=1\,. 
\end{align*}
Testing with $w_-$ and using $w_-(1)=0$ and $r'\ge 0$ yields
\begin{align*}
0\le\frac{ h^2}{\kappa}\int_0^1\chi_{[w<0]}w^2(y) r'(u(y))\,\rd y&=-\int_0^1\vert \partial_y w_-(y)\vert^2\,\rd y
\end{align*} 
and hence $w_-\equiv 0$. Therefore, $w\ge 0$ in $[0,1]$ and thus $\partial_y^2 w\ge 0$ since $r'(u)\ge 0$. Since $\partial_y w(0)=0$, we deduce that $w$ is nondecreasing. Assuming $w(0)=0$ for contradiction we had
\begin{align*}
\kappa \partial_{y}^2 w&= h^2r'(u)w\,, \quad 0<y<1\,,\\
w(0)&=\partial_{y}w(0)=0\,, 
\end{align*}
and thus $w\equiv 0$ by uniqueness, which is not possible. This yields~\eqref{w1}. It then remains to note from~\eqref{u} that
\begin{align*}
\partial_S\partial_y u[h,S](y)&=\partial_y w(y)=\frac{h^2}{\kappa}\int_0^y r'\big(u(\xi)\big) w(\xi)\,\rd \xi\ge
\frac{h^2}{\kappa}\big(\min_{[0,S]} r'\big) w(0) y\ge 0
\end{align*}
for $y\in [0,1]$ and the assertion follows.

{\bf (b)} Similarly, setting  $v(y):=\partial_h u[h,S](y)$ for $y\in [0,1]$ we have that $v\in {\rm C}^2([0,1])$ satisfies
\begin{subequations}\label{v}
\begin{align}
\kappa \partial_{y}^2 v&= 2h\, r(u(y))+h^2r'(u(y))v\,, \quad 0<y<1\,,\\
\partial_{y}v(0)&=0\,,\quad v(1)=0\,. 
\end{align}
\end{subequations}
Using $r(u), r'(u)\ge 0$, and $v_+(1)=0$ it follows that
\begin{align*}
0\le\frac{2h}{\kappa}\int_0^1 v_+(y) r(u(y))\,\rd y+ \frac{h^2}{\kappa}\int_0^1\chi_{[v>0]}v^2(y) r'(u(y))\,\rd y&=-\int_0^1\vert \partial_y v_+(y)\vert^2\,\rd y
\end{align*} 
and thus $v_+\equiv 0$. That is, $v\le 0$ in $[0,1]$. Clearly, if $h=0$, then \eqref{v} implies that $\partial_h u[0,S]=v\equiv 0$.
\end{proof}

Using the dimensionless variables~\eqref{dvar} from before and recalling~\eqref{u}, we note that
$$
\partial_z c(h)=\frac{1}{h}\partial_yu(1)=\frac{h}{\kappa} \int_0^1  r\bigl(
u(y)\bigr)\,\rd y\,.
$$
Note that the term on the right-hand side is defined for any $h\in\R$. Therefore, problem~\eqref{C} is (for $h>0$) equivalent to
\begin{subequations}\label{H}
\begin{align}
\kappa \partial_{y}^2u&= h(t)^2 r(u)\,, \qquad 0<y<1\,,  && t>0\,,\label{H1}\\
\partial_{y}u(t,0)&=0,\quad u(t,1)=S(t)\,, &&t>0\,,\label{H2}\\
h'(t)&=h(t)\int_0^{1} g\big(r(u(t,y))\big)\,\rd y +\frac{\alpha}{\beta} Q(t)-d(h(t))h(t)\,,&& t>0\,, \label{H3}\\
S'(t)&=D\big(S^*-S(t)\big)-k_1 Q(t)\nu\big(S(t)\big)-\frac{\rho h(t)}{\kappa} \int_0^1  r\bigl(
u(t,y)\bigr)\,\rd y\,, && t>0\,,\label{H4}\\
Q'(t)&= \big(\nu(S(t))-k_2\big)Q(t)+\beta d\big(h(t)\big)h(t)-\alpha Q(t)\,, && t>0\,,\label{H5}
\end{align}
subject to the initial conditions
\begin{align}
h(0)&=h_0\,,\quad S(0)=S_0\,,\quad Q(0)=Q_0 \label{c6}\,.
\end{align}
\end{subequations}
Given $h,S\in \R$ let  $u[h,S]\in {\rm C}^2([0,1])$ be the unique solution to  \eqref{U} provided by Proposition~\ref{P1}. We write $X:=(h,S,Q)\in\R^3$ and  define
\begin{subequations}\label{f}
\begin{align}
f_1(X)&:=h\int_0^1 g\bigl(r\bigl(u[h,S](y)\bigr)\bigr)\,\rd y +\frac{\alpha}{\beta} Q-d(h)h\,,\\
f_2(X)&:=D(S^*-S)-k_1Q\nu(S)-\frac{\rho h}{\kappa} \int_0^1  r\bigl(
u[h,S](y)\bigr)\,\rd y\,,\\
f_3(X)&:=\big(\nu(S)-k_2\big)Q+\beta d(h)h-\alpha Q\,.
\end{align}
\end{subequations}
Setting $f(X):=(f_1(X),f_2(X),f_3(X))$,  problem~\eqref{H} is equivalent to
\begin{align}\label{CP}
X'=f(X)\,,\quad t>0\,,\qquad X(0)=X_0\,,
\end{align}
This problem has a unique global solution:

\begin{thm}\label{T1}
Assume \eqref{GA}. Then, given $(h_0, S_0, Q_0)\in [0,\infty)^3$, there is a
unique global solution 
$$
(h,S,Q,u)\in {\rm C}^1\big([0,\infty),[0,\infty)^3\times {\rm C}^2([0,1])\big)
$$
to \eqref{H}.
 Moreover, it holds that $$0\le  u(t,y)\le S(t)\le \max\{S^*,S_0\}$$ for $t\ge 0$ and $y\in[0,1]$.
\end{thm}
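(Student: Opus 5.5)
We reduce Theorem~\ref{T1} to the ODE \eqref{CP} in $\R^3$. The argument has three steps: local Lipschitz continuity of $f$, invariance of $[0,\infty)^3$, and a priori bounds ruling out blow-up.

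\textbf{Step 1: local well-posedness.} By Proposition~\ref{P1} the map $(h,S)\mapsto u[h,S]$ belongs to ${\rm C}^1(\R^2,{\rm C}^2([0,1]))$, hence it is locally Lipschitz into ${\rm C}([0,1])$. Since $r\in {\rm C}^1$ and $g,\nu,d\in {\rm C}^{1-}$, the compositions $y\mapsto g(r(u[h,S](y)))$ and $y\mapsto r(u[h,S](y))$ depend locally Lipschitz on $(h,S)$ in the sup norm. Consequently each $f_j$ in \eqref{f} is locally Lipschitz on $\R^3$, and Picard--Lindel\"of gives a unique maximal solution $X\in{\rm C}^1([0,T_{\max}),\R^3)$. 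Setting $u(t):=u[h(t),S(t)]$ then yields $u\in{\rm C}^1([0,T_{\max}),{\rm C}^2([0,1]))$ by the chain rule.

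\textbf{Step 2: invariance of the nonnegative octant.} We verify the quasi-positivity condition on the boundary of $[0,\infty)^3$ and then invoke the standard invariance result for locally Lipschitz vector fields (alternatively, compare with $f+\varepsilon(1,1,1)$ and let $\varepsilon\to0$).
\begin{itemize}
\item[] If $h=0$ and $S,Q\ge0$, then $f_1=\frac{\alpha}{\beta}Q\ge0$.
\item[] If $Q=0$ and $h,S\ge 0$, then $f_3=\beta d(h)h\ge0$, where we extend $d$ so that $d\ge 0$ on $[0,\infty)$, which is possible since $d>0$ on $(0,\infty)$.
\item[] If $S=0$ and $h,Q\ge0$, then $u[h,0]\equiv0$ by uniqueness in Proposition~\ref{P1}, so $r(u)=r(0)=0$ and $\nu(0)=0$, giving $f_2=DS^*>0$.
\end{itemize}
This shows that solutions starting in $[0,\infty)^3$ remain there. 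It also gives $0\le u\le S$ whenever $S\ge0$, by \eqref{ccc1} (for $S=0$ we have $u\equiv0$).

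\textbf{Step 3: a priori bounds and global existence.} For $S\ge0$ and $h\ge0$ we have $r(u)\ge0$ and $\nu(S)\ge 0$, so $S'\le D(S^*-S)$. Comparison then yields $S(t)\le \max\{S^*,S_0\}=:M$, which proves the stated estimate $0\le u\le S\le M$.

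It remains to bound $h$ and $Q$, and we expect this to be the main obstacle, because $g$ is merely locally Lipschitz and $d$ need not grow, so no a priori upper bound on $h$ is available. On the other hand, we only need to exclude finite-time blow-up. Since $0\le u\le M$, the quantity $G:=\max_{[0,r(M)]}|g|$ is finite. The nonlinear coupling through $d(h)h$ enters with opposite signs in the $h$- and $Q$-equations, which suggests working with $W:=\beta h+Q$. Using $\nu(S)\le \max_{[0,M]}\nu$, we compute
\begin{align*}
W'=\beta h\int_0^1 g(r(u))\,\rd y+\alpha Q-\beta d(h)h+(\nu(S)-k_2)Q+\beta d(h)h-\alpha Q\le G\beta h+\max_{[0,M]}\nu\, Q\,,
\end{align*}
and hence $W'\le C W$ for a constant $C$. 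Gronwall's inequality gives $W(t)\le W(0)e^{Ct}$, so $h$ and $Q$ stay bounded on bounded time intervals. Therefore $T_{\max}=\infty$, and uniqueness follows from Step 1.
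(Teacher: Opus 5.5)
Your proposal is correct and follows essentially the same route as the paper's proof: local Lipschitz continuity of $f$ via Proposition~\ref{P1}, quasi-positivity on the faces of $[0,\infty)^3$, the comparison $S'\le D(S^*-S)$, and a linear growth bound that rules out blow-up. The one difference is cosmetic. The paper works with $\beta h+Q+S/k_1$ so that the consumption term $k_1Q\nu(S)$ cancels, whereas you use $\beta h+Q$ and bound $\nu(S)\le\max_{[0,M]}\nu$ directly; this is equally valid because $S$ is already bounded. Also, no extension of $d$ is needed, since continuity already gives $d(0)\ge 0$.
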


\begin{proof} 
 Since \mbox{$f\in {\rm C}^{1-}\big(\R^3,\R^3)$} by Proposition~\ref{P1} and Assumption~\eqref{GA}, there exists for each $X_0\in [0,\infty)^3$  a unique maximal solution $X\in {\rm C}^1\bigl([0,T_m),\R^3\bigr)$ to~\eqref{CP}. In fact,  \mbox{$T_m=\infty$} if for every $T>0$ there is $M(T)>0$ such that  $\|X(t)\|_{\R^3}\le M(T)$ whenever $t\in [0,T_m)\cap [0,T]$. Set $u(t,\cdot):=u[h(t),S(t)]$ for $t\in [0,T_m)$. Since 
$$u[h,0]\equiv 0\,,\qquad r(0)=\nu(0)=0\,,\qquad d(h)\ge 0\,,$$ 
for $h,S\ge 0$ due to Proposition~\ref{P1} and Assumption~\eqref{GA}, it follows for $i=1,2,3$ that $f_i(Y)\ge 0$ for $Y\in [0,\infty)^3$ with $Y_i=0$. Therefore, the solution $X$ is nonnegative in each component. 
Note from~\eqref{H4} that
$$
S'(t)\le D\big(S^*-S(t)\big)\,,\quad t\in [0,T_m)\,,\qquad S(0)=S_0\,,
$$
and hence $0\le S(t)\le S_1:=\max\{S_0,S^*\}$ for $t\in [0,T_m)$. Thus, $0\le r(u(t,\cdot))\le r(S_1)$ for $t\in [0,T_m)$ owing to~\eqref{ccc1} and $r'\ge 0$. Consequently, we deduce from~\eqref{H} that
\begin{align}
\frac{\rd}{\rd t}\left(\beta h+Q+\frac{1}{k_1}S\right)(t)&=h(t)\int_0^1 \left[\beta g\bigl(r\bigl(u(t,y)\bigr)\bigr)-\frac{\rho }{k_1\kappa }  r\bigl(u(t,y)\bigr)\right]\,\rd y\nonumber\\
&\qquad+ \frac{D}{k_1}(S^*-S(t))-k_2Q(t)\label{est}\\
&\le \beta g_0 h(t) + \frac{DS^*}{k_1}\nonumber
\end{align}
for $t\in [0,T_m)$ by Proposition~\ref{P1} with 
$$
g_0:=\max_{s\in [0,r(S_1)]} \vert g(s)\vert \,,
$$
where we may assume that $g_0>0$. It readily follows that
$$
0\le \beta h(t)+Q(t)+\frac{1}{k_1}S(t)\le \left(\beta h_0+Q_0+\frac{1}{k_1}S_0+\frac{DS^*}{k_1 g_0}\right)e^{ g_0 t}\,,\quad t\in [0,T_m)\,.
$$
Therefore, $T_m=\infty$.
\end{proof}

\section{Long-Term Dynamics and the Trivial Equilibrium}\label{Sec3}

In the following, we fix $(h_0,S_0,Q_0)\in [0,\infty)^3$ and consider the large-time behavior of the unique global nonnegative solution  $(h,S,Q,u)$ provided by Theorem~\ref{T1}.\\

We begin with a simple observation for the dynamics of $S$:

\begin{lem}\label{C0}
Assume~\eqref{GA}. Either 
\begin{equation}\label{S*}
S(t)\searrow S^* \ \text{ as }\ t\to\infty
\end{equation}
or there is $T_*\in[0,\infty)$ such that
\begin{equation}\label{T*}
S(t)> S^* \ \text{ for }\ 0\le t<T_* \quad \text{ and }\quad S(t)\le S^* \ \text{ for }\ t\ge T_*\,.
\end{equation}
\end{lem}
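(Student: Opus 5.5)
The key tool is the differential inequality already used in the proof of Theorem~\ref{T1}. Since $Q,h\ge 0$, $\nu\ge 0$ and $r(u)\ge 0$ along the solution, equation~\eqref{H4} gives
$$
S'(t)\le D\big(S^*-S(t)\big)\,,\qquad t\ge 0\,.
$$
The plan is to show from this that once $S$ reaches the level $S^*$ it can never exceed it again. Then split according to whether the level $S^*$ is ever reached.

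\textbf{Step 1: the set $\{S\le S^*\}$ is forward invariant.} Suppose $S(t_0)\le S^*$ for some $t_0\ge 0$. Set $w(t):=S(t)-S^*$, so that $w'\le -Dw$ on $[t_0,\infty)$. Multiplying by $e^{Dt}$ shows that $t\mapsto e^{Dt}w(t)$ is nonincreasing. Hence
$$
w(t)\le e^{-D(t-t_0)}w(t_0)\le 0\,,\qquad t\ge t_0\,,
$$
that is, $S(t)\le S^*$ for all $t\ge t_0$. This comparison argument is elementary; the only point to check is that the inequality holds for all $t\ge 0$, which follows from the nonnegativity in Theorem~\ref{T1}.

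\textbf{Step 2: case distinction.} First suppose $S(t)\le S^*$ for some $t\ge 0$. Let $T_*:=\inf\{t\ge0: S(t)\le S^*\}$. By continuity of $S$, we have $S(T_*)\le S^*$, and by the definition of the infimum, $S(t)>S^*$ for $0\le t<T_*$. Step~1 then gives $S(t)\le S^*$ for all $t\ge T_*$, which is~\eqref{T*}; in particular $T_*=0$ if $S_0\le S^*$.

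Otherwise $S(t)>S^*$ for all $t\ge0$. Then $S'(t)\le D(S^*-S(t))<0$, so $S$ is strictly decreasing and bounded below by $S^*$. It therefore converges to some limit $L\ge S^*$. Moreover, $e^{Dt}(S(t)-S^*)$ is nonincreasing, so $0<S(t)-S^*\le (S_0-S^*)e^{-Dt}\to0$. Hence $L=S^*$, giving~\eqref{S*}.

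No step is a real obstacle. The only care needed is the strictness bookkeeping in the definition of $T_*$, in particular when $S_0\le S^*$, where $T_*=0$ and the first condition in~\eqref{T*} is vacuous.
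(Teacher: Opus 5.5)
Your proof is correct and follows essentially the same route as the paper. Both rest on the differential inequality $S'\le D(S^*-S)$, from which one gets forward invariance of $\{S\le S^*\}$, strict decrease of $S$ while $S>S^*$, and convergence to $S^*$. You spell out the integrating-factor comparison and the definition of $T_*$ as an infimum, which the paper leaves implicit.
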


\begin{proof}
Since
$$
S'(t)\le D\big(S^*-S(t)\big)\,,\quad t\ge 0\,,
$$
it readily follows that $S(t)\le \max\{S(T),S^*\}$ for $t\ge T$ and that $S$ is strictly decreasing as long as $S(t)>S^*$ and that $\limsup_{t\to\infty} S(t)\le S^*$. This implies the assertion.
\end{proof}

\begin{prop}\label{PX}
Assume~\eqref{GA} and that 
\begin{equation}\label{hinfty}
\lim_{z\to\infty}d(z)=\infty\,.
\end{equation}
Then, the orbit $\{(h,S,Q)(t)\,;\, t\ge 0\}$ is bounded in $\R_+^3$. In particular, the omega limit set~$\omega(h_0,S_0,Q_0)$ is nonempty, invariant,  compact, and connected.
\end{prop}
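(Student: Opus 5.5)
We need to show that $h$, $S$ and $Q$ stay bounded; the properties of the omega limit set then follow from standard ODE theory.

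\emph{Bound on $S$.} Theorem~\ref{T1} already gives $0\le S(t)\le S_1:=\max\{S_0,S^*\}$. With \eqref{ccc1} this yields $0\le r(u(t,\cdot))\le r(S_1)$, and hence
$$\Big|\int_0^1 g\big(r(u(t,y))\big)\,\rd y\Big|\le g_0:=\max_{[0,r(S_1)]}|g|.$$

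\emph{Bound on $h$ and $Q$.} We work with the Lyapunov-type quantity $V:=\beta h+Q$. Adding $\beta$ times \eqref{H3} to \eqref{H5}, the attachment terms $\alpha Q$ and the detachment terms $\beta d(h)h$ cancel, leaving
$$V'=\beta h\int_0^1 g\big(r(u)\big)\,\rd y+\big(\nu(S)-k_2\big)Q\le \beta g_0 h+\big(\nu_1-k_2\big)Q,$$
where $\nu_1:=\max_{[0,S_1]}\nu$. The term $\nu(S)Q$ may outgrow the washout $k_2Q$, so $V$ alone gives no decay and we need to use the detachment term.

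The cleanest route is to include $S$, as in \eqref{est}. Setting $W:=\beta h+Q+S/k_1$ removes $\nu(S)Q$ and gives
$$W'\le \beta g_0 h+\frac{DS^*}{k_1}-k_2Q.$$
Only the term $\beta g_0h$ can push $W$ up, so the remaining task is to show that $h$ cannot be large for long.

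From \eqref{H3},
$$h'\le h\big(g_0-d(h)\big)+\frac{\alpha}{\beta}Q.$$
By \eqref{hinfty}, choose $z_0$ with $d(z)\ge g_0+M$ for $z\ge z_0$, where $M$ is large. Then $h'\le -Mh+\tfrac{\alpha}{\beta}Q$ whenever $h\ge z_0$. This lets us use the combination
$$W_\lambda:=W+\lambda h,\qquad \lambda>0.$$
When $h\ge z_0$,
$$W_\lambda'\le (\beta g_0-\lambda M)h+\Big(\frac{\lambda\alpha}{\beta}-k_2\Big)Q+\frac{DS^*}{k_1}.$$
Take $\lambda=k_2\beta/(2\alpha)$, so the $Q$ coefficient is $-k_2/2$. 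Then take $M$ so large that $\lambda M\ge \beta g_0+1$. This gives
$$W_\lambda'\le -h-\frac{k_2}{2}Q+\frac{DS^*}{k_1},$$
which is at most $-\varepsilon W_\lambda+C$ once we account for $S\le S_1$.

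When $h<z_0$, we have $\beta g_0 h\le \beta g_0 z_0$. The term $\lambda h'$ contributes at most $\lambda g_0z_0+\tfrac{\lambda\alpha}{\beta}Q$, because $d\ge0$; the $Q$ part is absorbed by $-k_2Q/2$ as before. Hence in both regions
$$W_\lambda'\le -\varepsilon W_\lambda+C$$
with constants independent of $t$. Gronwall then gives $\sup_{t\ge0}W_\lambda(t)<\infty$, and since $h,Q,S\ge0$ this bounds $h$ and $Q$.

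\emph{Omega limit set.} The orbit is bounded and $f$ is continuous (indeed $\mathrm{C}^{1-}$ with a global semiflow). Standard results for autonomous ODEs then give that $\omega(h_0,S_0,Q_0)$ is nonempty, compact, invariant and connected. It lies in $\R_+^3$ by Theorem~\ref{T1}.

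The main obstacle is the cross term $\nu(S)Q$. This is why $S/k_1$ must be added, following \eqref{est}, and why the weight $\lambda$ must be balanced against $\alpha/\beta$. This balancing only works because \eqref{hinfty} lets $d$ dominate $g_0$ for large $h$.
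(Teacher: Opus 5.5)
Your proof is correct and is essentially the paper's argument. Your $W_\lambda=(\beta+\lambda)h+Q+S/k_1$ is exactly the paper's functional $\eta h+Q+\frac{1}{k_1}S$ with $\eta=\beta+\frac{k_2\beta}{2\alpha}\in\bigl(\beta,\frac{k_2+\alpha}{\alpha}\beta\bigr)$, and the paper likewise splits according to whether $h$ lies above or below a threshold beyond which $d$ dominates $g_0$, reaching $\frac{\rd}{\rd t}W\le-\delta W+C$. The only difference is cosmetic: you absorb the $S$-component using $S\le S_1$, whereas the paper uses the dissipative term $-\frac{D}{k_1}S$.
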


\begin{proof}
We argue similarly as in the proof of Theorem~\ref{T1}. Recall from Lemma~\ref{C0} and~\eqref{GA} that  \mbox{$0\le S(t)\le S_1$} with $S_1=\max\{S_0,S^*\}$ and $0\le r(u(t,\cdot))\le r(S_1)$ for $t\ge 0$. It follows from~\eqref{H} that
\begin{align*}
\frac{\rd}{\rd t}\left(\eta h+Q+\frac{1}{k_1}S\right)(t)&=h(t)\int_0^1 \left[\eta g\bigl(r\bigl(u(t,y)\bigr)\bigr)-\frac{\rho }{k_1\kappa }  r\bigl(u(t,y)\bigr)\right]\,\rd y \\
&\quad -(\eta-\beta) d\big(h(t)\big)h(t)+ \frac{D}{k_1}(S^*-S(t))\\
&\quad -\left(k_2+\alpha\left(1-\frac{\eta}{\beta}\right)\right)Q(t)
\end{align*}
so that, choosing $\eta\in\left(\beta,\frac{k_2+\alpha}{\alpha}\beta\right)$ and setting
$$
\eta_0:=k_2+\alpha\left(1-\frac{\eta}{\beta}\right)>0\,,\qquad \eta_1:=\eta-\beta>0\,,\qquad g_0:=\max_{s\in [0,r(S_1)]} \vert g(s)\vert \,,
$$
we obtain
\begin{align*}
\frac{\rd}{\rd t}\left(\eta h+Q+\frac{1}{k_1}S\right)(t)&\le \big[\eta g_0-\eta_1 d\big(h(t)\big)\big] h(t) + \frac{D}{k_1}(S^*-S(t))-\eta_0 Q(t)
\end{align*}
for $t\ge 0$. Now, due to~\eqref{hinfty} we may choose $h_1>0$ such that 
$$
\eta g_0-\eta_1 d(h)\le -\eta\,,\quad h\ge h_1\,.
$$
Since 
$$
\big(\eta g_0-\eta_1 d(h)\big)h\le \eta(g_0+1)h_1-\eta h\,,\quad 0\le h\le h_1\,,
$$
we set $\delta:=\min\{1,D,\eta_0\}$ and obtain
\begin{align*}
\frac{\rd}{\rd t}\left(\eta h+Q+\frac{1}{k_1}S\right)(t)&\le -\delta \left(\eta h+Q+\frac{1}{k_1}S\right)(t) + ( g_0+1)\eta h_1+\frac{DS^*}{k_1}
\end{align*}
in any case. Consequently, we deduce that
\begin{align*}
\left(\eta h+Q+\frac{1}{k_1}S\right)(t)&\le \max\left\{\eta h_0+Q_0+\frac{1}{k_1}S_0,\frac{1}{\delta}( g_0+1)\eta h_1+\frac{DS^*}{\delta k_1}\right\}
\end{align*}
for $t\ge 0$, hence
the  orbit $\{(h,S,Q)(t)\,;\, t\ge 0\}$ is bounded in $\R_+^3$.
\end{proof}

\subsection{The Wash-Out Equilibrium}\label{Sec3a}

We examine  stability properties of the trivial equilibrium
$$
h=0\,,\quad S=S^*\,,\quad Q=0\,,\quad u\equiv S^*\,.
$$
To apply the Principle of Linearized Stability we assume that $g(r)$ and $\nu$ are differentiable at $S^*$ and that  $d$ is differentiable at $0$. Using the notation from Section~\ref{Sec2} and recalling from Proposition~\ref{P1} that $u[0,S^*]=S^*$, the Jacobian of $f$ at this equilibrium is
\begin{equation*}
\partial f(0,S^*,0)=\left(\begin{matrix}
g(r(S^*))-d(0) & 0 & \alpha/\beta\\[4pt]
-\frac{\rho}{\kappa}r(S^*) & -D & -k_1 \nu(S^*)\\[4pt]
\beta d(0) & 0& \nu(S^*)-k_2-\alpha
\end{matrix}\right)\,.
\end{equation*}
The eigenvalues of $\partial f(0,S^*,0)$ are thus given by
$$
\lambda_0:=-D<0
$$
and
$$
\lambda_\pm:=\frac{1}{2}\left(p+q \pm\sqrt{ (p-q)^2+4\alpha d(0)}\right)
$$
with 
$$
p:=g(r(S^*))-d(0) \,,\qquad q:=\nu(S^*)-k_2-\alpha\,.
$$
Note that $\lambda_\pm\in\R$ with $\lambda_+\geq \lambda_-$.
Hence, the local stability of the equilibrium $(0,S^*,0)$ is determined solely by the sign of $\lambda_+$.  
In fact, we have $\lambda_+<0$
and thus local stability provided that
$$
-pq+\alpha d(0)< 0 \quad \text{ and }\quad p+q<0\,,
$$
while
$\lambda_+ >0$
and thus local instability provided that
$$
-pq+\alpha d(0)>0 \quad \text{ or }\quad p+q>0\,.
$$
Consequently, we obtain:

\begin{prop}\label{P2}
Assume~\eqref{GA} and that $g(r)$ and $\nu$ are differentiable at $S^*$ and that  $d$ is differentiable at $0$. Then, the trivial equilibrium $(h,S,Q)=(0,S^*,0)$ is locally asymptotically stable in $\R^3$ provided that
\begin{align}\label{S1}
d(0)\Big( \nu(S^*)-k_2\Big)<g(r(S^*))\Big(\nu(S^*)-k_2-\alpha\Big)\tag{S1} 
\end{align}
and
\begin{align}\label{S2}
\nu(S^*)-k_2-\alpha<d(0)-g(r(S^*))  \tag{S2}
\end{align}
while it is unstable in $\R^3$ provided that
$$
d(0)\big( \nu(S^*)-k_2\big)> g(r(S^*))\big(\nu(S^*)-k_2-\alpha\big)\quad \text{ or }\quad \nu(S^*)-k_2-\alpha > d(0)-g(r(S^*)) \,.
$$
\end{prop}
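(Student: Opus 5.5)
The plan is to apply the Principle of Linearized Stability to the ODE system \eqref{CP}, using the Jacobian $\partial f(0,S^*,0)$ computed above. First I justify that $f$ is differentiable at $(0,S^*,0)$. By Proposition~\ref{P1}, $(h,S)\mapsto u[h,S]$ is ${\rm C}^1$ into ${\rm C}^2([0,1])$, and $u[0,S]\equiv S$, so $\partial_h u[0,S]=0$ (Corollary~\ref{Cor1}(b)) and $\partial_S u[0,S]\equiv 1$. Hence the term $h\int_0^1 g(r(u[h,S]))\,\rd y$ is differentiable at $(0,S^*)$. Its $h$-derivative is $g(r(S^*))$, and its $S$-derivative vanishes because of the prefactor $h=0$. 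This needs only differentiability of $g\circ r$ at $S^*$ together with continuity of $u$. The same reasoning applies to $\frac{\rho h}{\kappa}\int_0^1 r(u)\,\rd y$, to $d(h)h$ (using differentiability of $d$ at $0$), and to $Q\nu(S)$ (using differentiability of $\nu$ at $S^*$). This gives exactly the matrix displayed above. Since $f$ is locally Lipschitz and differentiable at the equilibrium with $f(X)=\partial f(0,S^*,0)(X-X^*)+o(|X-X^*|)$, the standard linearization theorem (stability if all eigenvalues have negative real part, instability if one has positive real part) applies. That theorem only needs Fréchet differentiability at the equilibrium point.

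Next I compute the spectrum. The second column is $(0,-D,0)^T$, so expanding along it gives $\lambda_0=-D$ together with the eigenvalues of the $2\times 2$ block
\[
\begin{pmatrix} p & \alpha/\beta\\ \beta d(0) & q\end{pmatrix}.
\]
This block has trace $p+q$ and determinant $pq-\alpha d(0)$. Its discriminant is $(p-q)^2+4\alpha d(0)\ge 0$, so both eigenvalues $\lambda_\pm$ are real. Both are negative if and only if the trace is negative and the determinant is positive. The largest eigenvalue is positive if the determinant is negative or the trace is positive, using that the eigenvalues are real.

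Finally I translate these conditions. Writing $a:=\nu(S^*)-k_2$ and $G:=g(r(S^*))$, we have $pq-\alpha d(0)=(G-d(0))(a-\alpha)-\alpha d(0)=G(a-\alpha)-d(0)a$. So positivity of the determinant is exactly \eqref{S1}, and $p+q<0$ is exactly \eqref{S2}. The reversed strict inequalities give the instability statement. The only delicate step is the first one, namely checking differentiability of $f$ at the equilibrium under the merely pointwise differentiability hypotheses. The algebra itself is routine.
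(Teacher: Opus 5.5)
Your proof is correct and follows the paper's own route. You compute the Jacobian at $(0,S^*,0)$, split off $\lambda_0=-D$ via the second column, and read off the signs of the real eigenvalues $\lambda_\pm$ of the $2\times 2$ block from its trace $p+q$ and determinant $pq-\alpha d(0)$; these conditions are exactly \eqref{S2} and \eqref{S1}. Your explicit check that $f$ is Fr\'echet differentiable at the equilibrium fills in a detail the paper leaves implicit. In fact, since every nonlinear term carries a factor $h$ or $Q$, continuity of $g\circ r$, $\nu$ and $d$ would already suffice there.
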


\begin{rem}\label{R1}
If $d(0)=0$, then the stability condition can be simplified since \eqref{S1} $\&$ \eqref{S2}  are equivalent to  $\nu(S^*)-k_2-\alpha<0$ and $g(r(S^*))< 0$. In particular, if $\nu$ and $g$ (and $r$) are nondecreasing, the wash-out equilibrium $(0,S^*,0)$ is stable for small $S^*$ and unstable for large $S^*$.
\end{rem}

\begin{proof}
Let $d(0)=0$. Assume \eqref{S1} and \eqref{S2}.
We claim that then $g(r(S^*))< 0$. Assume for contradiction that $g(r(S^*))\ge 0$. 
On the one hand,~\eqref{S1} with $d(0)=0$ implies $$\nu(S^*)-k_2-\alpha<-g(r(S^*))\le 0\,.$$ On the other hand, $(S_1)$ and $g(r(S^*))\ge 0$ imply $0<\nu(S^*)-k_2-\alpha$ and a contradiction. Hence, we do indeed have $g(r(S^*))< 0$. But then $(S_1)$ implies that $\nu(S^*)-k_2-\alpha< 0$.

Conversely, $\nu(S^*)-k_2-\alpha<0$ and $g(r(S^*))< 0$ surely imply \eqref{S1} and \eqref{S2}.
\end{proof}

We next show that the trivial equilibrium is globally asymptotically stable in $[0,\infty)^3$. To this end, we assume \eqref{S1} with
\begin{subequations}\label{Hx}
\begin{equation}\label{g2}
\text{$g$ is nondecreasing}
\end{equation}
and  strengthen \eqref{S2} to
\begin{equation}\label{nu}
\text{$\nu$ is nondecreasing with }\ \nu(S^*)<k_2\,.
\end{equation}
\end{subequations}
Under these slightly stronger assumptions, the trivial equilibrium is globally asymptotically stable (hence there is no other equilibrium):

\begin{cor}\label{C10}
Assume~\eqref{GA}, \eqref{S1},  \eqref{Hx}, and $d(x)\ge d(0)$ for $x\ge 0$.
Then $(0,S^*,0)$ is globally asymptotically stable in $[0,\infty)^3$.
\end{cor}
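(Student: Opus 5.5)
The plan is to build a linear Lyapunov-type functional $V=ah+Q$ for the pair $(h,Q)$. The constant $a$ is chosen so that the detachment term $\beta d(h)h$, which may be unbounded, drops out with a good sign. First I would encode the assumptions through $p:=g(r(S^*))-d(0)$ and $q:=\nu(S^*)-k_2-\alpha$. By \eqref{nu} we have $q<-\alpha<0$. Condition \eqref{S1} is equivalent to $pq>\alpha d(0)\ge 0$, and since $q<0$ this forces $p<0$.

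\textbf{Choice of $a$.} I look for $a\ge\beta$ satisfying
\[
a\,p+\beta d(0)<0\quad\text{and}\quad \frac{a\alpha}{\beta}+q<0,
\]
that is, $a\in\bigl(\beta d(0)/(-p),\,\beta(-q)/\alpha\bigr)$. This interval is nonempty precisely by \eqref{S1}. Its right endpoint exceeds $\beta$ because $-q>\alpha$. Hence some $a\ge\beta$ in it exists, and since all inequalities are strict they persist when $S^*$ is replaced by $S^*+\varepsilon$ for small $\varepsilon>0$. This uses continuity of $g\circ r$ and $\nu$ from \eqref{GA}.

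\textbf{Differential inequality.} By Lemma~\ref{C0}, $\limsup_{t\to\infty} S(t)\le S^*$, so there is $T\ge0$ with $S(t)\le S^*+\varepsilon$ for $t\ge T$. By Theorem~\ref{T1}, $0\le u(t,\cdot)\le S(t)$. Monotonicity of $r$, $g$ and $\nu$ (see \eqref{G1} and \eqref{Hx}) then gives $\int_0^1 g(r(u))\,\rd y\le g(r(S^*+\varepsilon))$ and $\nu(S)\le\nu(S^*+\varepsilon)$. Computing from \eqref{H3} and \eqref{H5},
\[
(ah+Q)'=h\Bigl[a\!\int_0^1\! g(r(u))\,\rd y-(a-\beta)d(h)\Bigr]+\Bigl[\frac{a\alpha}{\beta}+\nu(S)-k_2-\alpha\Bigr]Q .
\]
Because $a\ge\beta$ and $d(h)\ge d(0)$, the $h$-coefficient is at most $a\,g(r(S^*+\varepsilon))-(a-\beta)d(0)<0$, and the $Q$-coefficient is negative. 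Using $h,Q\ge0$, this yields $V'\le-\delta V$ for $t\ge T$ with some $\delta>0$, so $h(t),Q(t)\to0$ exponentially.

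\textbf{Convergence of $S$.} Then $f_2$ in \eqref{H4} reads $S'=D(S^*-S)+o(1)$, since $r(u)\le r(S_1)$ and $\nu(S)$ stay bounded. Hence $S(t)\to S^*$ by a standard variation-of-constants estimate.

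\textbf{Lyapunov stability.} If $S_0\le S^*+\varepsilon$, then $S(t)\le\max\{S_0,S^*\}\le S^*+\varepsilon$ for all $t$, so $T=0$ and $V(t)\le V(0)e^{-\delta t}$. Smallness of $h_0,Q_0$ then keeps $h,Q$ small uniformly in time. Feeding this into the $S$-equation, which satisfies $|S-S^*|'\lesssim -D|S-S^*|+C(h+Q)$, keeps $S$ close to $S^*$. This gives stability, and together with attraction, global asymptotic stability.

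\textbf{Main obstacle.} The delicate point is the choice of $a$: it must simultaneously kill the possibly unbounded detachment contribution (forcing $a\ge\beta$) and make both coefficients negative. I expect to verify this carefully via the equivalence of \eqref{S1} with $\alpha d(0)<pq$ together with $\nu(S^*)<k_2$.
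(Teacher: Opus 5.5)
Your proposal is correct and follows essentially the same route as the paper. The paper also uses the linear functional $\eta h+Q$ with $\eta\in\bigl(\max\{\beta,\beta d(0)/(d(0)-g(r(S^*+\delta)))\},\,\beta(\alpha+k_2-\nu(S^*+\delta))/\alpha\bigr)$, gets exponential decay of $h$ and $Q$ once $S\le S^*+\delta$, and then deduces $S\to S^*$ from \eqref{H4}. The differences are minor: you fix the weight at $S^*$ and then perturb, whereas the paper perturbs first, and you add an explicit Lyapunov-stability step for data with $S_0\le S^*+\varepsilon$, which the paper's proof leaves implicit.
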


\begin{proof}
By continuity  and assumptions \eqref{S1} and~\eqref{nu} there exists some $\delta>0$ such that
\begin{align}\label{GS2}
d(0)\big( \nu(S^*+\delta)-k_2\big)<g(r(S^*+\delta))\big(\nu(S^*+\delta)-k_2-\alpha\big)
\end{align}
 and
\begin{align}\label{GS1}
\nu(S^*+\delta)<k_2\,.
\end{align}
Since \eqref{GS2} is equivalent to
\begin{align}\label{GS2b}
0\le \alpha d(0)<\big(d(0)-g(r(S^*+\delta))\big)\big(\alpha+k_2-\nu(S^*+\delta)\big)
\end{align}
we first deduce that $d(0)-g(r(S^*+\delta))>0$ due to~\eqref{GS1} and hence, that
$$
\frac{\beta d(0)}{d(0)-g(r(S^*+\delta))}<\frac{\beta(\alpha+k_2-\nu(S^*+\delta))}{\alpha}\,.
$$
Noticing from~\eqref{GS1} that in addition
$$
\beta<\frac{\beta(\alpha+k_2-\nu(S^*+\delta))}{\alpha}\,,
$$
we may then choose $\eta$ such that
\begin{align}\label{etaGS}
\max\left\{\beta,\frac{\beta d(0)}{d(0)-g(r(S^*+\delta))}\right\}<\eta<\frac{\beta(\alpha+k_2-\nu(S^*+\delta))}{\alpha}\,.
\end{align}
Note from Lemma~\ref{C0} that there exists $T_\delta>0$ such that $S(t)\le S^*+\delta$ for $t\ge T_\delta$. Therefore, we infer from~\eqref{g2} that 
\begin{align*}
g\bigl(r\bigl(u(t,y)\bigr)\bigr)\le g(r(S(t)))\le g(r(S^*+\delta))
\qquad \text{and}\qquad  \nu(S(t))\le \nu(S^*+\delta)
\end{align*}
for $t\ge T_\delta$ and $y\in [0,1]$. It then follows from~\eqref{H} that
\begin{align*}
\frac{\rd}{\rd t}\left(\eta h+Q\right)(t)&=h(t)\eta\int_0^1  g\bigl(r\bigl(u(t,y)\bigr)\bigr)\,\rd y-(\eta-\beta)d\big(h(t)\big)h(t)\\
&\quad +\left(\nu(S(t))-k_2-\alpha+\frac{\alpha\eta}{\beta}\right)Q(t)\\
&\le \Big(\eta g(r(S^*+\delta))-(\eta-\beta)d(0)\Big)h(t)\\
&\quad +\left(\nu(S^*+\delta)-k_2-\alpha+\frac{\alpha\eta}{\beta}\right)Q(t)\\
&\le -m \big(\eta h+Q\big)(t)
\end{align*}
for $t\ge T_\delta$ with 
$$
m:=\min\left\{ (1-\frac{\beta}{\eta})d(0)-g(r(S^*+\delta)), \alpha-\frac{\alpha\eta}{\beta}+k_2-\nu(S^*+\delta)\right\}>0\,.
$$
Consequently,
$$
\lim_{t\to\infty} h(t)=0\,,\qquad \lim_{t\to\infty} Q(t)=0\,.
$$
In particular, given $\ve>0$ there is $T_\ve>0$ such that
$$
0\le k_1Q(t)\nu(S(t))+\frac{\rho h(t)}{\kappa} \int_0^1  r\bigl(
u(t,y)\bigr)\,\rd y\le D\ve\,,\qquad t\ge T_\ve\,,
$$ 
so that~\eqref{H4} implies that
$$
S'(t)\ge D\big(S^*-S(t)\big)- D\ve\,,\qquad t\ge T_\ve\,.
$$
Integrating this inequality yields
$$
\liminf_{t\to\infty} S(t)\ge S^*-\ve
$$
and since $\ve>0$ was arbitrary and $\limsup_{t\to\infty} S(t)\le S^*$ by Lemma~\ref{C0}, we deduce that $\lim_{t\to\infty} S(t)= S^*$.
\end{proof}
We refer to Appendix~\ref{AppN} for illustrations of the results this section.

\section{Nontrivial Equilibrium}\label{Sec4}
In this section, we investigate the existence, uniqueness, and stability properties of nontrivial equilibria to~\eqref{H}. A nontrivial equilibrium is a triplet $(h,S,Q)\in (0,\infty)^3$ that, with the corresponding $u=u[h,S]\in C^2([0,1])$ from Proposition~\ref{P1}, satisfies the system 
\begin{subequations}\label{HSS}
\begin{align}
\kappa \partial_{y}^2u&= h^2 r(u)\,, \qquad 0<y<1\,,  \label{HSS1}\\
\partial_{y}u(0)&=0,\quad u(1)=S \,,\label{HSS2}\\
0&=h\int_0^{1} g\big(r(u(y))\big)\,\rd y +\frac{\alpha}{\beta} Q -d(h)h\,, \label{HSS3}\\
0&=D(S^*-S)-k_1 Q\nu(S)-\frac{\rho h}{\kappa} \int_0^1  r\bigl(
u(y)\bigr)\,\rd y\,, \label{HSS4}\\
0&= \big(\nu(S)-k_2\big)Q+\beta d(h)h-\alpha Q\,.\label{HSS5}
\end{align}
\end{subequations}
To begin with, we note that
several necessary conditions for the existence of a nontrivial equilibrium can be derived straightforwardly:

\begin{lem}
Let $(h,S,Q)\in (0,\infty)^3$ be a nontrivial equilibrium. Then necessarily $S<S^*$ and 
\begin{equation}\label{DeltaS}
\alpha+k_2-\nu(S)>0
\end{equation}
with
\begin{equation}\label{Q}
Q=\frac{\beta d(h)h}{\alpha+k_2-\nu(S)}
\end{equation}
and
\begin{equation}\label{gp}
\int_0^{1} g\big(r(u(y))\big)\,\rd y=\frac{\big(k_2-\nu(S)\big) d(h)}{\alpha+k_2-\nu(S)} \,.
\end{equation}
Moreover,  if $g$, $\nu$, and $d$ are nondecreasing, then necessarily
\begin{align}\label{n20}
g(r(S^*))\big(\alpha+k_2-\nu(S^*)\big)\ge d(0)\big(k_2 -\nu(S^*)\big) \quad \text{ or }\quad \nu(S^*)\ge k_2\,.
\end{align}
\end{lem}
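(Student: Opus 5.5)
The plan is to read everything directly off the equilibrium system \eqref{HSS}, using the sign information from Proposition~\ref{P1} together with $h,S,Q>0$.

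\emph{The claim $S<S^*$.} From \eqref{HSS4},
$$D(S^*-S)=k_1Q\nu(S)+\frac{\rho h}{\kappa}\int_0^1 r(u(y))\,\rd y .$$
Since $S>0$, assumption \eqref{G2} gives $\nu(S)>0$, so the first term on the right is strictly positive because $Q>0$. By \eqref{ccc1}, $u\ge u(0)>0$, and then \eqref{G1} gives $r(u)>0$, so the second term is strictly positive as well. Hence $S^*-S>0$.

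\emph{The claims \eqref{DeltaS} and \eqref{Q}.} Equation \eqref{HSS5} reads $(\alpha+k_2-\nu(S))Q=\beta d(h)h$. The right-hand side is strictly positive because $d(h)>0$ for $h>0$, and $Q>0$. Therefore $\alpha+k_2-\nu(S)>0$, and dividing by it yields \eqref{Q}.

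\emph{The claim \eqref{gp}.} Substitute \eqref{Q} into \eqref{HSS3} and divide by $h>0$:
$$\int_0^1 g(r(u))\,\rd y=d(h)-\frac{\alpha}{\beta}\,\frac{\beta d(h)}{\alpha+k_2-\nu(S)}=\frac{(k_2-\nu(S))\,d(h)}{\alpha+k_2-\nu(S)}.$$

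\emph{The necessary condition \eqref{n20}.} Now assume $g$, $\nu$ and $d$ are nondecreasing, and suppose $\nu(S^*)<k_2$; we must prove the first alternative. Since $\nu$ is nondecreasing and $S<S^*$, we get $\nu(S)\le\nu(S^*)<k_2$, so $k_2-\nu(S)\ge k_2-\nu(S^*)>0$.

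The key step is to bound both sides of \eqref{gp} and compare.
\begin{itemize}
\item For the left side: $u\le S\le S^*$ by \eqref{ccc1}, and $r,g$ are nondecreasing, so the left side is at most $g(r(S^*))$.
\item For the right side: the map $x\mapsto x/(\alpha+x)$ is increasing for $x>0$. Applying it with $x=k_2-\nu(S)\ge k_2-\nu(S^*)>0$, and using $d(h)\ge d(0)\ge0$, the right side is at least
$$\frac{(k_2-\nu(S^*))\,d(0)}{\alpha+k_2-\nu(S^*)}.$$
\end{itemize}
Combining the two bounds gives
$$g(r(S^*))\ge \frac{(k_2-\nu(S^*))\,d(0)}{\alpha+k_2-\nu(S^*)}.$$
Multiplying by the positive quantity $\alpha+k_2-\nu(S^*)$ yields \eqref{n20}.

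The only delicate point is this monotonicity comparison: it needs $d(h)\ge d(0)$, which follows from $d$ being nondecreasing, and it needs the correct sign of $k_2-\nu$, which is exactly what reduces the argument to the case $\nu(S^*)<k_2$.
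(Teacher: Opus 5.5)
Your proof is correct. For $S<S^*$, \eqref{DeltaS}, \eqref{Q} and \eqref{gp} you argue as the paper does, reading everything off \eqref{HSS4}, \eqref{HSS5} and \eqref{HSS3}. For \eqref{n20} you take a genuinely different route.

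The paper obtains \eqref{n20} by contraposition from Corollary~\ref{C10}. If $\nu(S^*)<k_2$ and \eqref{S1} held, the washout state $(0,S^*,0)$ would be globally asymptotically stable in $[0,\infty)^3$. That rules out any other equilibrium, since a constant solution would have to converge to $(0,S^*,0)$.

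You instead work directly with the equilibrium identity \eqref{gp}. You bound its left side above by $g(r(S^*))$, using $u\le S<S^*$ and the monotonicity of $g\circ r$. You bound its right side below using the monotonicity of $x\mapsto x/(\alpha+x)$ together with $d(h)\ge d(0)\ge 0$. You also correctly note that the sign condition $k_2-\nu(S^*)>0$ is what makes the last multiplication legitimate.

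Your argument is purely static and self-contained. It needs none of the Lyapunov-type estimate, the $\delta$-perturbation of \eqref{S1}, or Lemma~\ref{C0} that underlie Corollary~\ref{C10}. It also shows exactly where each monotonicity hypothesis enters. The paper's route is shorter once Corollary~\ref{C10} is available, and it presents \eqref{n20} as the natural complement of the global stability result for the washout equilibrium.
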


\begin{proof}
It readily follows from \eqref{HSS4} that $S<S^*$ while~\eqref{HSS5} yields~\eqref{Q}. The latter yields~\eqref{DeltaS}  and, together with~\eqref{HSS3}, that~\eqref{gp} holds true. Finally, if $g$, $\nu$, and $d$ are nondecreasing, then Corollary~\ref{C10} implies that there is no nontrivial equilibrium if 
$$
g(r(S^*))\big(\alpha+k_2-\nu(S^*)\big)< d(0)\big(k_2 -\nu(S^*)\big) \quad \text{ and }\quad \nu(S^*)< k_2\,.
$$
This gives~\eqref{n20}.
\end{proof}

\subsection{A General Existence Result}

We now establish the existence of at least one nontrivial equilibrium when the stability condition~\eqref{S1} from Proposition~\ref{P2} is violated:

\begin{thm}\label{P5}
Assume~\eqref{GA} with  $g(0)<0$. Let $\nu\in {\rm C}^1([0,S^*])$ be increasing, and let $d\in {\rm C}^1(\R^+)$ with $\lim_{h\to\infty} d(h)h=\infty$. Moreover, assume that
\begin{equation}\label{n1}
\alpha+k_2> \nu(S^*)
\end{equation}
and that
\begin{align}\label{n2}
g(r(S^*))\big(\alpha+k_2-\nu(S^*)\big)> d(0)\big(k_2 -\nu(S^*)\big)\,.
\end{align}
Then, there is at least one nontrivial equilibrium 
$$
(h_\star,S_\star,Q_\star)\in (0,\infty)\times (0,S^*)\times (0,\infty)\,,\qquad  u_\star=u[h_\star,S_\star]\in {\rm C}^2([0,1])\big)
$$ 
to \eqref{H}. 
\end{thm}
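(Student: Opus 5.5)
Reduce the system \eqref{HSS} to one scalar equation in $h$ and use the intermediate value theorem.

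\textbf{Step 1: solve for $S$ given $h$.} For fixed $h>0$, consider
$$
\Phi(h,S):=D(S^*-S)-k_1Q(h,S)\nu(S)-\frac{\rho h}{\kappa}\int_0^1 r\bigl(u[h,S](y)\bigr)\,\rd y\,,
\qquad Q(h,S):=\frac{\beta d(h)h}{\alpha+k_2-\nu(S)}\,.
$$
This is the expression obtained by substituting \eqref{Q} into \eqref{HSS4}.

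By \eqref{n1} and the monotonicity of $\nu$, the denominator $\alpha+k_2-\nu(S)$ is positive on $[0,S^*]$. Hence $Q(h,S)$ is well defined and nondecreasing in $S$. By Corollary~\ref{Cor1}(a) and $r'\ge0$, the integral term is nondecreasing in $S$. Therefore $S\mapsto\Phi(h,S)$ is strictly decreasing on $[0,S^*]$, because of the term $-DS$.

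At the endpoints:
\begin{itemize}
\item $\Phi(h,0)=DS^*>0$, since $\nu(0)=0$ and $u[h,0]\equiv0$;
\item $\Phi(h,S^*)<0$ for $h>0$, since $r(u)>0$ there.
\end{itemize}
So there is a unique $S=\sigma(h)\in(0,S^*)$ with $\Phi(h,\sigma(h))=0$. The implicit function theorem, together with Proposition~\ref{P1}, shows that $\sigma$ is continuous on $[0,\infty)$, with $\sigma(0)=S^*$.

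\textbf{Step 2: reduce to a scalar equation.} Set $Q=Q(h,\sigma(h))>0$. Equations \eqref{HSS4} and \eqref{HSS5} then hold by construction. After dividing \eqref{HSS3} by $h>0$, it becomes $\Psi(h)=0$, where
$$
\Psi(h):=\int_0^1 g\bigl(r(u[h,\sigma(h)](y))\bigr)\,\rd y-\frac{\bigl(k_2-\nu(\sigma(h))\bigr)d(h)}{\alpha+k_2-\nu(\sigma(h))}\,.
$$
The function $\Psi$ is continuous on $[0,\infty)$. At $h=0$ we have $u\equiv S^*$, so
$$
\Psi(0)=g(r(S^*))-\frac{(k_2-\nu(S^*))\,d(0)}{\alpha+k_2-\nu(S^*)}>0
$$
by \eqref{n2} and \eqref{n1}. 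It therefore suffices to find some $h$ with $\Psi(h)<0$.

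\textbf{Step 3: show $\Psi(h)<0$ for large $h$ (main obstacle).} From $\Phi(h,\sigma(h))=0$ we get
$$
\frac{\rho h}{\kappa}\int_0^1 r(u)\,\rd y\le DS^*
\qquad\text{and}\qquad
k_1 Q\,\nu(\sigma(h))\le DS^*.
$$
Two cases follow.
\begin{itemize}
\item \emph{Case A: $\liminf_{h\to\infty}\sigma(h)>0$.} Since $\nu$ is increasing and $\nu(0)=0$, $\nu(\sigma(h))$ is bounded away from zero. The bound $k_1Q\nu(\sigma)\le DS^*$ then forces $Q$ to stay bounded. But $Q\ge \beta d(h)h/(\alpha+k_2)\to\infty$, a contradiction.
\item \emph{Case B: $\sigma(h_n)\to0$ along a sequence $h_n\to\infty$.} This is the only possibility left. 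Then $0\le u\le\sigma(h_n)\to0$ uniformly, so the integral term of $\Psi(h_n)$ tends to $g(0)<0$. The second term of $\Psi(h_n)$ equals
$$
-\frac{(k_2-\nu(\sigma))\,d(h_n)}{\alpha+k_2-\nu(\sigma)}\,.
$$
For large $n$ we have $\nu(\sigma)<k_2$, so this term is $\le0$. Hence $\Psi(h_n)<0$ for large $n$.
\end{itemize}

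\textbf{Conclusion.} By the intermediate value theorem there is $h_\star>0$ with $\Psi(h_\star)=0$. Setting $S_\star=\sigma(h_\star)\in(0,S^*)$, $Q_\star>0$ and $u_\star=u[h_\star,S_\star]$ gives the required nontrivial equilibrium.

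The delicate point is Step 3: controlling $\sigma(h)$ as $h\to\infty$ using only $d(h)h\to\infty$, and not any growth of $d$ itself. The case split via the $Q\nu$ term resolves it, because $g(0)<0$ makes the first term eventually negative and the sign of the second term is favourable.
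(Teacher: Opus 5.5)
Your proposal is correct and follows the paper's proof essentially step for step. You eliminate $Q$ via \eqref{Q}, use strict monotonicity in $S$ of the substrate balance to get a unique continuous $S(h)$ with $S(0)=S^*$, and apply the intermediate value theorem to the reduced scalar function (your $\Psi$ is the paper's $\mathsf{G}$), which is positive at $h=0$ by \eqref{n2} and negative for large $h$ because $g(0)<0$. The only difference is cosmetic: the paper gets the full limit $S(h)\to 0$ directly from $\nu(S(h))\le DS^*(\alpha+k_2)/(k_1\beta d(h)h)$, so your $\liminf$ case split is unnecessary, though harmless.
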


\begin{proof} Let us first note that \eqref{n1} and the monotonicity of $\nu$ imply that
\begin{equation}\label{n3}
\alpha+k_2- \nu(S)>0\,,\quad S\in [0,S^*]\,,
\end{equation}
and recall from Proposition~\ref{P1} that
$$
\Bigl[ (h,S)\mapsto u[h,S]\Bigr]\in {\rm C}^1\big(\R^2,{\rm C}^2([0,1])\big)\,,
$$
where $u[h,S]$ is the solution to~\eqref{HSS1}-\eqref{HSS2}.
Next, owing to~\eqref{Q}
we can eliminate $Q$ and obtain from \eqref{gp} the identity
\begin{subequations}\label{Aw}
\begin{equation}\label{A1w}
G(h,S):=\int_0^{1} g\big(r(u[h,S](y))\big)\,\rd y-d(h)\left(1-\frac{\alpha}{\alpha+k_2-\nu(S)}\right)\stackrel{!}{=}0
\end{equation}
while \eqref{HSS4} yields 
\begin{equation}\label{A2}
E(h,S):=DS+\frac{\rho h}{\kappa} \int_0^1  r\bigl(u[h,S](y)\bigr)\,\rd y + \frac{k_1\beta \nu(S) d(h)h}{\alpha+k_2-\nu(S)}\stackrel{!}{=}DS^*\,.
\end{equation}
\end{subequations}
That is, \eqref{HSS} is equivalent to solving~\eqref{Aw} for $(h,S)$ and then defining $Q$ by~\eqref{Q}.

In order to solve \eqref{Aw}, let $h\ge 0$ first be fixed. Using $u[h,0]\equiv 0$, $\nu(0)=0$, $r(0)=0$, and positivity, it readily follows that
\begin{equation}\label{n6}
E(h,0)=0\,,\qquad E(h,S^*)\ge  DS^*\,.
\end{equation}
Since 
\begin{align*}
\partial_SE(h,S)&=D+\frac{\rho h}{\kappa} \int_0^1  r'\bigl(u[h,S](y)\bigr) \partial_Su[h,S](y)\,\rd y+\frac{k_1\beta \nu'(S) d(h)h(\alpha+k_2)}{(\alpha+k_2-\nu(S))^2}\,,
\end{align*}
it follows from Corollary~\ref{Cor1}~{\bf (a)}, and the monotonicity of $r$ and $\nu$ that $E(h,S)$ is strictly increasing with respect to $S$. Therefore, using~\eqref{n6} we find for each $h\ge 0$  a unique $S(h)\in (0,S^*]$ such that
\begin{equation}\label{E}
E\big(h,S(h)\big)=DS^*\,,\quad h\ge 0\,,
\end{equation}
with $S(0)=S^*$. In fact, the Implicit Function Theorem implies that 
$S\in {\rm C}^1(\R^+)$. Thus, setting $\mathsf{G}(h):=G(h,S(h))$ for $h\ge 0$, we obtain $\mathsf{G}\in {\rm C}^1(\R^+)$. Since $S(0)=S^*$ and $u[0,S^*]=S^*$, it follows from assumption~\eqref{n2} that
\begin{equation}\label{n7}
\mathsf{G}(0)=  g\big(r(S^*)\big)-d(0)\left(1-\frac{\alpha}{\alpha+k_2-\nu(S^*)}\right)>0\,.
\end{equation}
Next, the positivity of $r$ and $\nu$ yield that
$$
DS^*=E(h,S(h))\ge  \frac{k_1\beta \nu(S(h)) d(h)h}{\alpha+k_2}
$$
and since $\lim_{h\to\infty} d(h)h=\infty$, we deduce that 
\begin{equation}\label{t1}
\lim_{h\to\infty} S(h)=0\,.
\end{equation} 
Since 
$$
0\le u[h,S(h)](y)\le S(h) \rightarrow 0 
$$
for $y\in [0,1]$ as  $h\rightarrow \infty$
according to Proposition~\ref{P1}, we  conclude that
\begin{equation}\label{n4}
\lim_{h\to\infty} u[h,S(h)](y)=0\ \text{ uniformly with respect to $y\in [0,1]$}\,.
\end{equation}
 Therefore, it follows from~\eqref{n4}, \eqref{GA}, and the assumption $g(0)<0$ that there is $h_0>0$ such that $g(r(u[h_0,S(h_0)](y)))<0$ for $y\in [0,1]$. Moreover, due to~\eqref{t1} and $\nu(0)=0$ we may assume that
$$
1-\frac{\alpha}{\alpha+k_2-\nu(S(h_0))}>0\,.
$$
Hence, we obtain that
\begin{equation}\label{n8}
\mathsf{G}(h_0)\le \int_0^{1} g\big(r(u[h_0,S(h_0)](y))\big)\,\rd y <0\,.
\end{equation}
It thus follows from \eqref{n7}, \eqref{n8}, and the Mean Value Theorem that there exists some $h_\star\in (0,h_0)$ such that $\mathsf{G}(h_\star)=0$. Setting $S_\star:=S(h_\star)\in (0,S^*)$ and defining $Q_\star>0$  and $u_\star\in {\rm C}^2([0,1])$ by~\eqref{Q} and Proposition~\ref{P1}, respectively, we have established the existence of a nontrivial equilibrium for \eqref{H}.
\end{proof}

\begin{rem}
Unfortunately, the proof above does not establish the uniqueness of the nontrivial equilibrium. Indeed,
\begin{align*}
\frac{\rd }{\rd h}\mathsf{G}(h)&=\int_0^{1} (g\circ r)'\big(u[h,S(h)](y)\big) \left\{\partial_h u[h,S(h)](y)+\partial_S u[h,S(h)](y) S'(h)\right\}\,\rd y\\
&\qquad-d'(h)\left(1-\frac{\alpha}{\alpha+k_2-\nu(S(h))}\right) +d(h)\frac{\alpha \nu'(S(h)) S'(h)}{(\alpha+k_2-\nu(S(h)))^2}
\end{align*}
does not necessarily have a definite (negative) sign, even if $g$ and $d$ are nondecreasing, since $S'$ need not have a definite sign. Differentiating \eqref{E} does not determine the sign of $S'$ (recall from Proposition~\ref{P1} that $\partial_h u[h,S]\le 0$ while $\partial_S u[h,S]\ge 0$).
\end{rem}

\subsection{Uniqueness by a Shooting Argument}\label{SS4.2} 

Theorem~\ref{P5} establishes the existence of a nontrivial equilibrium. Under more restrictive assumptions we shall now  derive uniqueness of a nontrivial equilibrium relying on a shooting argument as in~\cite{GW_QAM}. More precisely, we assume that $g$ has the special form
\begin{align}\label{gg}
g(s):=a(s-b)\,,\quad s\ge 0\,,
\end{align}
for some $a,b>0$ as is the case in \cite{MasicEberl12}. In this setting, for an equilibrium to~\eqref{C} we have from~\eqref{C1} and~\eqref{C2} that
\begin{align}\label{gw}
\int_0^h g\big(r(c(z))\big)\,\rd z=a\int_0^h r(c(z))\,\rd z
-a bh=a \kappa \partial_z c(h)- abh\,.
\end{align}
Combining \eqref{gw} and \eqref{C3} we obtain at equilibrium the problem
\begin{subequations}\label{A}
\begin{align}
&\kappa\partial_z^2 c\stackrel{!}{=}r(c)\,, \quad 0<z<h\,, \label{A1}\\
&\partial_z c(0)\stackrel{!}{=}0\,,\quad
 c(h)\stackrel{!}{=}S\,,\quad
a\kappa \partial_z c(h)\stackrel{!}{=}d(h)h+abh-\frac{\alpha}{\beta}Q \label{A4}
\end{align}
for $c$ and $h$, where $Q$ and $S$ we shall derive from the conditions
\begin{equation}\label{A5}
\big(\alpha+k_2-\nu(S)\big)Q\stackrel{!}{=}\beta d(h)h
\end{equation}
respectively
\begin{equation}\label{A6}
DS+k_1Q\nu(S)+\rho\partial_zc(h)\stackrel{!}{=}D S^*\,.
\end{equation}
\end{subequations}
Clearly, \eqref{A5} implies that $Q$ is given by
\begin{equation}\label{A5x}
Q=\frac{\beta d(h)h}{\alpha+k_2-\nu(S)}
\end{equation}
and thus depends only on $h$ and $S$. Plugging this into \eqref{A6} (using~\eqref{A4}), we also obtain a relation only depending on $h$ and $S$:
\begin{equation}\label{A6x}
DS+\frac{\rho b h}{\kappa}+\frac{d(h)h}{\alpha+k_2-\nu(S)} \left[\left(k_1\beta-\frac{\rho}{a\kappa}\right)\nu(S)+\frac{\rho k_2}{a\kappa}\right]\stackrel{!}{=}D S^*\,.
\end{equation}
Consequently, the existence of a unique nontrivial equilibrium to~\eqref{C} is equivalent to find a unique nontrivial solution to~\eqref{A1}-\eqref{A4} and \eqref{A5x}-\eqref{A6x}. In order to do so, we first show that for any fixed \(h\) there exists a unique solution
\(S = S(h)\) to~\eqref{A6x}. Together with~\eqref{A5x}, this allows us to reduce
\eqref{A1}--\eqref{A4} to a problem for the unknowns \(h\) and \(c\) only, which we then
solve by a shooting argument.

To carry through the technical details, we impose that
\begin{subequations}\label{NT}
\begin{equation}\label{h1}
\nu\in {\rm C}^1([0,S^*]) \ \text{is nondecreasing} \,,\quad   d\in {\rm C}^1(\R^+)  \ \text{is strictly increasing}\,,
\end{equation}
and that
\begin{align}\label{h2}
k_2>\nu(S^*)\quad\text{ and }\quad g(r(S^*))\big(\alpha+k_2-\nu(S^*)\big)>d(0)\big(k_2-\nu(S^*)\big)\,.
\end{align}
Note that \eqref{h1}-\eqref{h2} imply in particular the instability of the wash-out equilibrium according to~Proposition~\ref{P2} and that
$$
\alpha+k_2-\nu(S)\ge \alpha+k_2-\nu(S^*)>0\,,\quad 0\le S\le S^*\,.
$$
We further impose that
\begin{align}
 & \frac{\alpha}{a(\alpha+k_2)}\le \frac{\beta k_1 \kappa}{\rho} \label{h3a}\\
\textit{ or }\quad \nonumber\\
 &\text{$S\mapsto \frac{\nu'(S)}{(\alpha+k_2-\nu(S))^2}$ is strictly monotone on $[0,S^*]$}\,.\label{h3b}
\end{align}
\end{subequations}

Under these assumptions we can prove the existence of a {\it unique} nontrivial equilibrium:

\begin{thm}\label{T5}
Assume~\eqref{GA}, \eqref{gg}, and~\eqref{NT}.
Then, there exists a unique nontrivial equilibrium solution 
$$
(h_\star,S_\star,Q_\star)\in (0,\infty)\times (0,S^*)\times (0,\infty)\,,\qquad  c_\star\in {\rm C}^2([0,h_\star])\big)
$$ 
to \eqref{C}. 
\end{thm}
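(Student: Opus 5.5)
Existence is already provided by Theorem~\ref{P5}: \eqref{h1}--\eqref{h2} give \eqref{n1}--\eqref{n2}, $g(0)=-ab<0$, and $d(h)h\to\infty$ because $d$ is strictly increasing with $d(h)\ge d(0)\ge 0$, so $d(h)h\ge d(1)h$ for $h\ge 1$. The real task is uniqueness, and I would prove it with the reduction \eqref{A1}--\eqref{A4}, \eqref{A5x}--\eqref{A6x} together with a shooting argument as in \cite{GW_QAM}.

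\textbf{Step 1: solving \eqref{A6x} for $S$.} For fixed $h\ge 0$, call the left-hand side of \eqref{A6x} $\Phi(h,S)$. We have $\Phi(h,S^*)\ge DS^*$, and I would show $\Phi(h,S)\le DS^*$ for small $S$ when $h$ lies in the relevant range; otherwise $h$ is simply excluded. This gives a unique $S=S(h)$ on an interval $[0,h_{\max})$, with $S(0)=S^*$.

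Uniqueness of $S(h)$ needs $\partial_S\Phi>0$. We have
\[
\partial_S\Phi=D+d(h)h\,\frac{\nu'(S)}{(\alpha+k_2-\nu(S))^2}\Big[(\alpha+k_2)\Big(k_1\beta-\frac{\rho}{a\kappa}\Big)+\frac{\rho k_2}{a\kappa}\Big].
\]
Under \eqref{h3a} the bracket equals $(\alpha+k_2)k_1\beta-\frac{\rho\alpha}{a\kappa}\ge 0$, so $\Phi$ is strictly increasing in $S$. Under \eqref{h3b} instead, I would use the strict monotonicity of $\nu'/(\alpha+k_2-\nu)^2$ to show that $\Phi(h,\cdot)-DS^*$ has at most one zero. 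For instance, one can write $\Phi(h,S)=DS+\frac{\rho bh}{\kappa}+d(h)h\,\Psi(S)$ and argue with convexity or concavity of $\Psi$. In either case the implicit function theorem yields $S\in{\rm C}^1$. Moreover, $h\mapsto S(h)$ should be decreasing, since $\partial_h\Phi>0$ because $d(h)h$ is increasing and the bracket is positive near the relevant values.

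\textbf{Step 2: shooting.} Fix $h$. Substituting $S(h)$ and $Q$ from \eqref{A5x}, the conditions become $c(h)=S(h)$ and $a\kappa\,\partial_zc(h)=\Lambda(h):=d(h)h+abh-\frac{\alpha d(h)h}{\alpha+k_2-\nu(S(h))}$. Shoot from $z=0$: let $c(\cdot;\sigma)$ solve $\kappa c''=r(c)$ with $c(0)=\sigma>0$ and $c'(0)=0$. This solution is increasing and convex, and by the monotonicity of $r$ both $c$ and $c'$ are increasing in $\sigma$ and in $z$. Define $\sigma(h)$ through $c(h;\sigma)=S(h)$; this is unique because $c(h;\cdot)$ is strictly increasing. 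Equilibria then correspond exactly to zeros of
\[
\Theta(h):=a\kappa\,\partial_zc(h;\sigma(h))-\Lambda(h).
\]
This is the same function as $h\,\mathsf G(h)$ up to sign and the factor $a$. At $h=0$ we get $\Theta'(0^+)>0$ by \eqref{h2}, and for large $h$ we get $\Theta<0$.

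\textbf{Step 3: the main obstacle, showing $\Theta$ changes sign only once.} By the first integral, $\kappa\,(\partial_zc)^2/2=R(c)-R(\sigma)$ with $R'=r$, so $\partial_zc(h)=\sqrt{2(R(S(h))-R(\sigma(h)))/\kappa}$. Since $S(h)$ is decreasing and $h\mapsto\sigma(h)$ decreases even faster, the flux is controlled by $R(S(h))$. Meanwhile $\Lambda(h)/h=d(h)\frac{k_2-\nu(S(h))}{\alpha+k_2-\nu(S(h))}+ab$ is strictly increasing, because $d$ is strictly increasing and $\nu(S(h))$ is decreasing. I would therefore prove that $\partial_zc(h;\sigma(h))/h$ is strictly decreasing in $h$, so that $\Theta(h)/h$ is strictly decreasing.

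To get this, I would scale to $y=z/h$ as in \eqref{U}: $\partial_zc(h)/h=\frac1\kappa\int_0^1 r(u[h,S(h)])\,dy$. This is nonincreasing in $h$, since Corollary~\ref{Cor1} gives $\partial_hu\le 0$ and $\partial_Su\ge0$ with $S'\le 0$. The resulting comparison, in which nonincreasing minus strictly increasing is strictly decreasing, yields exactly one zero of $\Theta(h)/h$ on $(0,\infty)$, and hence uniqueness. The crux is the sign $S'(h)\le 0$, particularly under \eqref{h3b}, where the bracket in Step 1 may be negative. There I would argue directly from $\Phi(h,S(h))=DS^*$ that $S$ cannot increase, since $d(h)h$ is increasing. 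If this fails, I would instead differentiate along zeros of $\Theta$ and show $\Theta'<0$ there.
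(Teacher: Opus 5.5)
Your proposal is correct, and for the uniqueness step it takes a genuinely different and much shorter route than the paper.

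\textbf{Comparison of routes.} Your Step~1 is the paper's Proposition~\ref{L20}; your $\Phi$ is the paper's $F$. After that the paper parametrizes by the shooting value $\mu$. It builds $h(\mu)$ in Proposition~\ref{L21} and introduces the auxiliary function $M(z,\mu)$ in Lemma~\ref{L22}. In Proposition~\ref{L40} it shows that $B(\mu)=\Theta(h(\mu))$ is strictly increasing on $[0,\underline{\mu})$ and positive on $(\underline{\mu},S^*)$, which needs a second-order analysis of $w=\partial_\mu{\rm c}$. You instead use
\[
\frac{\Theta(h)}{h}=a\int_0^1 r\big(u[h,S(h)](y)\big)\,\rd y-ab-d(h)\,\frac{k_2-\nu(S(h))}{\alpha+k_2-\nu(S(h))}\,.
\]
Its first term is nonincreasing by Corollary~\ref{Cor1} once $S$ is nonincreasing. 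Its last term is strictly increasing: $d>0$ is strictly increasing, $x\mapsto(k_2-x)/(\alpha+k_2-x)$ is positive and decreasing on $[0,\nu(S^*)]$, and $\nu\circ S$ is nonincreasing. As $h\to0^+$ the quotient tends to $\big[g(r(S^*))(\alpha+k_2-\nu(S^*))-d(0)(k_2-\nu(S^*))\big]/(\alpha+k_2-\nu(S^*))>0$, and at $h_*$ it equals $-ab-k_2d(h_*)/(\alpha+k_2)<0$. This gives existence and uniqueness together, so Theorem~\ref{P5}, Lemma~\ref{L22}, and even the shooting solutions become unnecessary: $\sigma(h)=u[h,S(h)](0)$, and everything is expressed through Proposition~\ref{P1}. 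The paper's route, modeled on \cite{GW_QAM}, only adds the localization $\mu_*\le\underline{\mu}$.

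\textbf{Minor corrections.} $\Theta(h)/h$ is exactly $G(h,S(h))$ from \eqref{A1w}, not ``up to sign and the factor $a$''. The difference is that your $S(h)$ comes from $F$, after the flux has been eliminated via the Neumann condition, not from $E$ in \eqref{A2}. The two functions $S(\cdot)$ agree only at equilibria. This is precisely what removes the obstruction in the remark after Theorem~\ref{P5}, where the $E$-based $S(h)$ has no definite monotonicity. Also replace ``for large $h$'' and ``on $(0,\infty)$'' by the endpoint $h_*$ and the interval $(0,h_*)$. The first-integral heuristics play no role and can be dropped.

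\textbf{Step 1.} Replace ``near the relevant values'' with the exact statement: $\partial_h\Phi>0$ on all of $[0,\infty)\times[0,S^*]$. This holds because the bracket in \eqref{A6x} equals $k_1\beta\nu(S)+\frac{\rho}{a\kappa}(k_2-\nu(S))>0$ by \eqref{h2}. Together with a single sign change of $\Phi(h,\cdot)-DS^*$ from negative to positive, this gives $S(h_1)>S(h_2)$ for $h_1<h_2$, which is all Step~3 needs.

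Under \eqref{h3b}, however, strict convexity or concavity gives a unique zero only when $\Phi(h,0)<DS^*<\Phi(h,S^*)$, that is, for $h<h_*$. It does not give ``at most one zero'' for every $h$. If $\partial_S\Phi(h_*,0)<0$, which can happen when $k_1\beta(\alpha+k_2)<\rho\alpha/(a\kappa)$, then $\Phi(h,\cdot)$ is strictly convex. It then still crosses $DS^*$ inside $(0,S^*)$ at $h=h_*$ and for some $h>h_*$. So ``otherwise $h$ is simply excluded'' needs an argument in that case. Under \eqref{h3a} it is immediate, since $\Phi(h,S)>\Phi(h,0)\ge DS^*$ for $S>0$ and $h\ge h_*$.

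The proof of Proposition~\ref{L20} handles this point the same way: it asserts $F(h,0)<DS^*$ also at $h=h_*$, contradicting \eqref{c12}, and it does not discuss $h>h_*$. So on this point your proof is exactly as complete as the paper's.
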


Note that conditions~\eqref{h3a} and~\eqref{h3b} both constitute structural assumptions that are independent of the magnitude of the inlet concentration $S^*$ and are not needed for the instability of the trivial equilibrium (they are used in the proof of Proposition~\ref{L20} below).
In this regard it is worth to note the following:

 \begin{rem}\label{R10}
 In  \cite{MasicEberl12}, the function $\nu$ is of the form $\nu(S)=AS/(B+S)$ with given \mbox{$A,B>0$}. It readily follows that such a function satisfies assumption~\eqref{h3b}  provided that $\alpha+k_2\not=A$. Moreover, the functions $r$, $d$, and  $g$ in \cite{MasicEberl12} satisfy~\eqref{GA},~\eqref{gg}, and~\eqref{h1}. Therefore, for the particular case considered in \cite{MasicEberl12}, we prove herein the existence of a unique nontrivial equilibrium under the sole assumptions \eqref{h2} and $\alpha+k_2\not=A$. 
\end{rem}

The proof of Theorem~\ref{T5} is divided into several steps.
Focusing first on~\eqref{A6x} we express $S$ as a function of $h$. For this purpose we set
$$
F(h,S):=DS+\frac{\rho b h}{\kappa}+\frac{d(h)h}{\alpha+k_2-\nu(S)} \left[\left(k_1\beta-\frac{\rho}{a\kappa}\right)\nu(S)+\frac{\rho k_2}{a\kappa}\right]
$$
for $h\ge 0$ and $S\in [0,S^*]$.

\begin{prop}\label{L20}
Assume~\eqref{GA}, \eqref{gg}, and~\eqref{NT}.
There exist a unique $h_*>0$ and a unique, strictly decreasing function $S\in {\rm C}^1([0,h_*], [0,S^*])$ with $S(0)=S^*$ and $S(h_*)=0$ such that 
$$
F\big(h,S(h)\big)=DS^*\,,\quad h\in [0,h_*]\,.
$$
\end{prop}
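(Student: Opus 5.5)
The plan is to reverse the roles of the variables. I first solve $F(h,S)=DS^*$ for $h$ as a function of $S$, where monotonicity in $h$ is easy to see. I then show that the resulting curve $S\mapsto h(S)$ is strictly decreasing and invert it. Write
$$
F(h,S)=DS+\frac{\rho b h}{\kappa}+d(h)h\,\Phi(S)\,,\qquad \Phi(S):=\frac{\bigl(k_1\beta-\frac{\rho}{a\kappa}\bigr)\nu(S)+\frac{\rho k_2}{a\kappa}}{\alpha+k_2-\nu(S)}\,.
$$
The numerator of $\Phi$ equals $k_1\beta\nu(S)+\frac{\rho}{a\kappa}(k_2-\nu(S))$, which is positive by \eqref{h2} and the monotonicity of $\nu$. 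The denominator is positive as noted after \eqref{h2}. Hence $\Phi>0$ on $[0,S^*]$.

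Since $d\ge 0$ is strictly increasing, $h\mapsto d(h)h$ is increasing. Together with the term $\rho bh/\kappa$ this gives $\partial_hF>0$ and $F(h,S)\to\infty$ as $h\to\infty$. Because $F(0,S)=DS\le DS^*$, each $S\in[0,S^*]$ admits a unique $h=\mathsf h(S)\ge0$ with $F(\mathsf h(S),S)=DS^*$. The Implicit Function Theorem, using $\partial_hF>0$, gives $\mathsf h\in {\rm C}^1([0,S^*])$. Moreover $\mathsf h(S^*)=0$ and $\mathsf h(S)>0$ for $S<S^*$. Set $h_*:=\mathsf h(0)>0$.

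It then suffices to show that $\mathsf h$ is strictly decreasing with $\mathsf h'<0$. Then $S:=\mathsf h^{-1}\in{\rm C}^1([0,h_*],[0,S^*])$ has all the required properties. Uniqueness of $h_*$ and of $S(\cdot)$ is automatic, because for each $h$ the solutions are exactly the points of the graph of $\mathsf h$. Along the curve, $\mathsf h'(S)=-\partial_SF/\partial_hF$, and a short computation gives
$$
\partial_SF(h,S)=D+d(h)h\,\Phi'(S)\,,\qquad \Phi'(S)=\Bigl(k_1\beta(\alpha+k_2)-\frac{\rho\alpha}{a\kappa}\Bigr)\frac{\nu'(S)}{(\alpha+k_2-\nu(S))^2}\,.
$$
Under \eqref{h3a} the constant prefactor is nonnegative. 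Hence $\partial_SF\ge D>0$ everywhere, and we are done.

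Under \eqref{h3b}, with the prefactor negative, $\Phi'$ is a negative multiple of a strictly monotone function, so $\Phi$ is strictly convex or strictly concave. At a critical point $S_0$ of $\mathsf h$ we have $\partial_SF=0$. Differentiating the identity $F(\mathsf h(S),S)=DS^*$ twice then gives
$$
\mathsf h''(S_0)=-\frac{\partial_S^2F}{\partial_hF}=-\frac{d(h)h\,\Phi''(S_0)}{\partial_hF}\,.
$$
In the concave case $\mathsf h''(S_0)>0$, so every critical point is a strict local minimum. A ${\rm C}^1$ function on an interval whose critical points are all strict local minima is monotone or decreasing-then-increasing. The latter is excluded since $\mathsf h(S^*)=0<\mathsf h(S)$ for $S<S^*$. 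So $\mathsf h$ is decreasing, which forbids any strict interior minimum, so there are no critical points. One would still have to rule out a zero of $\mathsf h'$ at an endpoint; I would handle that by the same second-order argument applied one-sidedly.

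The convex case is the main obstacle, since then critical points would be strict local maxima. Convexity yields $\Phi(0)\ge\Phi(S_0)-\Phi'(S_0)S_0$, which combined with $d(h)h\Phi'(S_0)=-D$ gives $F(\mathsf h(S_0),0)\ge DS^*$, that is, $\mathsf h(S_0)\ge h_*$. This alone does not contradict an interior maximum. I would first try to sharpen this by using the explicit sign coupling between the direction of monotonicity of $\nu'/(\alpha+k_2-\nu)^2$ and the sign of the prefactor. If that fails, I would try the upper bound $d(h)h\Phi(S)\le D(S^*-S)$ valid along the curve, aiming to show $d(h)h|\Phi'(S)|<D$ there.
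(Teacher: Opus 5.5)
\textbf{Assessment.} The proposal has a genuine gap in the convex subcase of \eqref{h3b}, and that gap cannot be closed because the statement is false there. The paper's proof breaks at the same point.

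\textbf{What works, and the counterexample.} Under \eqref{h3a} your argument is complete. Parametrizing the level set by $S$ is in fact cleaner than the paper's route: $\partial_hF\ge\rho b/\kappa>0$ gives $\mathsf{h}$ and uniqueness at once, and $\partial_SF\ge D$ gives $\mathsf{h}'<0$ on all of $[0,S^*]$, endpoints included.

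The gap is the convex subcase of \eqref{h3b}, i.e.\ $C:=k_1\beta(\alpha+k_2)-\frac{\rho\alpha}{a\kappa}<0$ with $\nu'/(\alpha+k_2-\nu)^2$ strictly decreasing. This contains the Monod setting of Remark~\ref{R10} with $A<\alpha+k_2$. The mechanism is simple. $h_*$ is fixed by $F(h,0)$, which does not involve $\nu$. Meanwhile $\partial_SF(h_*,0)=D+d(h_*)h_*\,C\,\nu'(0)/(\alpha+k_2)^2$ turns negative once $\nu'(0)$ is large.

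Concretely, let $D=k_Q=\alpha=a=b=\kappa=\rho=k_1=1$ (so $k_2=2$), $\beta=\frac16$, $S^*=3$, $d(h)=3h$, $r(c)=4c/(1+c)$ and $\nu(S)=S/(S+\frac1{60})$. Then:
\begin{itemize}
\item \eqref{GA}, \eqref{gg}, \eqref{h1} and \eqref{h2} hold, since $\nu(S^*)=\frac{180}{181}<2$ and $g(r(S^*))=2>0=d(0)$.
\item \eqref{h3a} fails, since $\frac13>\frac16$.
\item \eqref{h3b} holds, because $\nu'/(3-\nu)^2=\frac1{60}\bigl(2S+\frac1{20}\bigr)^{-2}$ is strictly decreasing.
\end{itemize}
Here
\[
F(h,S)=S+h+\frac{3h^2\bigl(2-\frac56\,\nu(S)\bigr)}{3-\nu(S)}\,.
\]
So $F(h,0)=h+2h^2$ forces $h_*=1$, whereas $F\bigl(1,\frac1{60}\bigr)=\frac1{60}+1+\frac{19}{10}=\frac{35}{12}<3=DS^*$; equivalently $\partial_SF(1,0)=-9$. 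Your interior maximum of $\mathsf{h}$ therefore really occurs. Moreover, any $S(\cdot)$ as in the statement would have to take the value $\frac1{60}$ at some $h\le 1$. There $F(h,\frac1{60})\le F(1,\frac1{60})<DS^*$, a contradiction, so no such function exists.

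\textbf{Comparison with the paper's proof.} The paper fixes $h$ and uses strict convexity or concavity of $F(h,\cdot)$ together with $F(h,0)<DS^*<F(h,S^*)$. For $0<h<h_*$ this correctly gives a unique root with $\partial_SF>0$. This amounts exactly to the bound $\mathsf{h}(S_0)\ge h_*$ you already derived. The paper then uses the same inequality at $h=h_*$, where in fact $F(h_*,0)=DS^*$, and declares $S(h_*)=0$ ``by construction''. In the example, the branch $S(h)$, $0\le h<1$, converges to the positive root of $F(1,\cdot)=3$ instead.

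\textbf{What can be salvaged.} The statement can be proved in three situations:
\begin{itemize}
\item under \eqref{h3a}, by your argument;
\item under \eqref{h3b} when $F(h,\cdot)$ is concave;
\item in the convex subcase, under the extra condition $\partial_SF(h_*,0)>0$. Then $F(h_*,\cdot)$ is increasing, so $F(h,S)>DS^*$ for $h>h_*$.
\end{itemize}
In the concave subcase, drop your second-order test. $\Phi''$ need not exist for $\nu\in{\rm C}^1$, and strict concavity would not force $\Phi''<0$ anyway. Argue for fixed $h$ instead. For $h>h_*$, concavity and $F(h,0),F(h,S^*)>DS^*$ exclude roots. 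At $h=h_*$, concavity gives $\partial_SF(h_*,0)\ge\bigl(F(h_*,S^*)-DS^*\bigr)/S^*>0$, which also settles the endpoint you flagged.
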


\begin{proof}
Since $\nu(0)=0$, we have
$$
F(h,0)=\frac{\rho h}{a\kappa}\left(ab+\frac{k_2d(h)}{\alpha+k_2}  \right)\,,\quad h>0\,.
$$
 Consequently, there is a unique $h_*>0$ such that 
\begin{align}\label{c12}
F(h,0)<DS^*\,,\quad 0\le h< h_*\,,\qquad F(h_*,0)=DS^*\,.
\end{align} 
Next, we derive from \eqref{h1} and~\eqref{h2} that
\begin{align}\label{c13}
F(h,S^*)=DS^* +\frac{k_1\beta\nu(S^*)d(h)h}{\alpha+k_2-\nu(S^*)}+\frac{\rho h}{a\kappa}\left(ab+\frac{k_2-\nu(S^*)}{\alpha+k_2-\nu(S^*)}d(h)\right)> DS^*
\end{align}
for $0< h\le h_*$ and $F(0,S^*)=DS^*$. Since
\begin{align}\label{c14}
\partial_SF(h,S)=D+\frac{\nu'(S)d(h)h}{(\alpha+k_2-\nu(S))^2} \left[k_1\beta(\alpha+k_2)-\frac{\rho\alpha}{a\kappa}\right]\,,
\end{align}
alternative~\eqref{h3a} implies that $\partial_SF(h,S)>0$ while  alternative~\eqref{h3b} implies that $\partial_SF(h,S)$  is strictly monotone in $S$ (and hence $F(h,\cdot)$ is strictly convex or concave). In any case, for each $h\in [0,h_*]$, the function $S\mapsto \partial_SF(h,S)$ has at most one zero. Therefore, since $F(h,0)<DS^*<F(h,S^*)$ for $h\in (0,h_*]$ as observed above, it follows that for each $h\in [0,h_*]$ there is a unique $S(h)\in [0,S^*]$ such that $F(h,S(h))=DS^*$. 
Moreover, we have $S(0)=S^*$ and $S(h_*)=0$ by construction. In addition, since $\partial_SF(h,S)>0$ or $F(h,\cdot)$ is strictly convex or concave as just proven, we necessarily have 
\begin{align}\label{c15}
\partial_S F(h,S(h))>0\,,\quad h\in (0,h_*)\,.
\end{align} 
The Implicit Function Theorem and \eqref{c15} then imply that $S\in C^1([0,h_*])$. It remains to show that~$S'\le 0$.
To this end, note  that
\begin{align*}
\partial_h F(h,S)&=\frac{d'(h)h}{\alpha+k_2-\nu(S)} \left[ k_1\beta\nu(S)+\frac{\rho}{a\kappa}(k_2-\nu(S))\right]\\
&\qquad +\frac{k_1\beta \nu(S)d(h)}{\alpha+k_2-\nu(S)} +\frac{\rho}{a\kappa}\left( ab+\frac{k_2-\nu(S)}{\alpha+k_2-\nu(S)}d(h)\right) \,.
\end{align*}
We infer from~\eqref{h2} and the monotonicity of $d$ that $\partial_h F(h,S(h))>0$ for $h\in [0,h_*]$. Differentiating $F(h,S(h))=DS^*$, we obtain
$$
\partial_h F(h,S(h))+S'(h)\partial_SF(h,S(h))=0
$$
for $h\in (0,h_*)$, hence $S'(h) <0$ for $h\in [0,h_*]$ due to~\eqref{c15}. This proves Proposition~\ref{L20}.
\end{proof}

It remains to consider~\eqref{A1}-\eqref{A4}, where $S=S(h)$ is given by Proposition~\ref{L20} and \mbox{$Q=Q(h)$} as in~\eqref{A5x}, that is, to find a unique $h\in (0,h_*)$ and a unique function \mbox{$c \in {\rm C}^2([0,h])$} such that
\begin{subequations}\label{eq}
\begin{align}
&\kappa\partial_z^2 c\stackrel{!}{=}r(c)\,, \quad 0<z<h\,, \label{A1s}\\
&\partial_z c(0)\stackrel{!}{=}0\,,\quad
 c(h)\stackrel{!}{=}S(h)\,,\quad
a\kappa \partial_z c(h)\stackrel{!}{=}abh+\frac{(k_2-\nu(S(h)))d(h)h}{\alpha+k_2-\nu(S(h))}\,. \label{A4s}
\end{align}
\end{subequations}
To this end, we use a shooting argument. The following result is readily seen, we refer to~\cite[Lemma~3.7]{GW_QAM}:

\begin{lem}\label{L17} 
For each $\mu\in [0,S^*]$ there exists a unique function ${\rm c}(\cdot,\mu)\in {\rm C}^2([0,\infty))$ satisfying
\begin{subequations}\label{shooting}
\begin{align}
&\kappa\partial_z^2 {\rm c}=r({\rm c})\,, \quad z>0\,, \label{A1ss}\\
&{\rm c}(0)=\mu\,,\quad \partial_z {\rm c}(0)=0\,. \label{A4ss}
\end{align}
\end{subequations}
The functions $z\mapsto {\rm c}(z,\mu)$ and $z\mapsto \partial_z{\rm c}(z,\mu)$ are increasing. In addition,
$$
\big[\mu\mapsto c(\cdot,\mu)\big]\in {\rm C}^1\big( [0,S^*],{\rm C}^2([0,R])\big)
$$
for each $R>0$. Moreover, $w(z,\mu):=\partial_\mu {\rm c}(z,\mu)$ satisfies
\begin{subequations}\label{shootingm}
\begin{align}
&\kappa\partial_z^2 w(z,\mu)=r'({\rm c}(z,\mu))w(z,\mu)\,, \quad z>0\,, \label{A1ssm}\\
&w(0,\mu)=1\,,\quad \partial_z w(0,\mu)=0\,, \label{A4ssm}
\end{align}
\end{subequations}
with $w(z,\mu)\ge 1$ and $\partial_zw(z,\mu)\ge 0$ for $z\ge 0$ and  $\mu\in [0,S^*]$.
\end{lem}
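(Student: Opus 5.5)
The plan is to treat \eqref{shooting} as a second-order initial value problem. The nonlinearity $r$ is ${\rm C}^1$, hence locally Lipschitz, so Picard--Lindelöf gives a unique maximal ${\rm C}^2$ solution ${\rm c}(\cdot,\mu)$ on some $[0,T_\mu)$. Since ${\rm c}''=r({\rm c})/\kappa$ is continuous in ${\rm c}$, we even get ${\rm c}\in{\rm C}^3$.

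\emph{Monotonicity and global existence.} First I will show that ${\rm c}\ge\mu\ge0$ as long as the solution exists. If $\mu=0$, uniqueness gives ${\rm c}\equiv0$. If $\mu>0$, then $\kappa\partial_z^2{\rm c}(0)=r(\mu)>0$ by \eqref{G1}. A continuity argument now applies: on any interval where ${\rm c}>0$ we have $\partial_z^2{\rm c}>0$. Together with $\partial_z{\rm c}(0)=0$ this makes $\partial_z{\rm c}$ increasing and positive for $z>0$, so ${\rm c}$ is increasing and stays $\ge\mu>0$. Hence the interval where ${\rm c}>0$ is the whole existence interval. (For $\mu=0$, ``increasing'' should be read as nondecreasing.)

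To rule out blow-up in finite time, I will use that $r$ is nondecreasing and ${\rm C}^1$. This gives $r(s)\le r(\mu)+L(s-\mu)$ on bounded sets, but that is only local. Globally I will instead use an energy identity. Multiplying by $\partial_z{\rm c}$ yields
\[
\tfrac{\kappa}{2}(\partial_z{\rm c})^2=\int_\mu^{{\rm c}}r(s)\,\rd s ,
\]
which alone does not prevent blow-up for superlinear $r$. So global existence on $[0,\infty)$ really needs a growth condition, for example that $r$ is bounded or at most linearly growing. This holds for Monod kinetics. In the general setting, the remark that it ``is readily seen'' presumably relies on the growth of $r$ as in \cite{GW_QAM}. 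Under linear growth $r(s)\le r(\mu)+Ls$, Gronwall applied to $({\rm c},\partial_z{\rm c})$ excludes blow-up, and I would state this assumption explicitly. I expect this step to be the main obstacle: as written, \eqref{G1} allows superlinear $r$, and then solutions can blow up, e.g.\ $r(s)=s^3$. I would therefore either invoke the corresponding hypothesis of \cite{GW_QAM}, or restrict to $[0,R]$ with $R$ below the blow-up time.

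\emph{Differentiability in $\mu$.} Standard differentiable dependence on initial data, using $r\in{\rm C}^1$, gives ${\rm c}\in{\rm C}^1$ in $\mu$ with values in ${\rm C}^1([0,R])$. The equation then upgrades this to ${\rm C}^2([0,R])$, since $\partial_z^2{\rm c}=r({\rm c})/\kappa$ depends ${\rm C}^1$ on $\mu$ through ${\rm c}$. Differentiating \eqref{shooting} in $\mu$ yields \eqref{shootingm}.

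\emph{Properties of $w$.} Finally, $r'({\rm c})\ge0$, $w(0)=1$ and $\partial_zw(0)=0$. The same continuity argument as above applies to $w$: while $w>0$ we have $\partial_z^2w\ge0$, so $\partial_zw\ge0$ and $w\ge1$. Hence $w$ never reaches $0$, and we obtain $w\ge1$ and $\partial_zw\ge0$ on $[0,\infty)$.
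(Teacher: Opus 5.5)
Your argument is correct, and it supplies more than the paper does. The paper gives no proof, only the remark that the lemma ``is readily seen'' and a pointer to \cite[Lemma~3.7]{GW_QAM}. Your route is the standard self-contained one:
\begin{itemize}
\item Picard--Lindel\"of for the first-order system in $({\rm c},\partial_z{\rm c})$;
\item the convexity/continuity argument giving ${\rm c}\ge\mu$ and $\partial_z{\rm c}\ge 0$;
\item the same argument for $w$, using $r'\ge 0$, giving $w\ge 1$ and $\partial_z w\ge 0$;
\item ${\rm C}^1$-dependence on initial data, upgraded to ${\rm C}^2([0,R])$ through the equation.
\end{itemize}

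Your objection to global existence is also justified. The function $r(s)=s^3$ satisfies \eqref{G1}. Yet the solution of $\kappa\partial_z^2{\rm c}={\rm c}^3$, ${\rm c}(0)=\mu>0$, $\partial_z{\rm c}(0)=0$ blows up at $z=\sqrt{2\kappa}\,\mu^{-1}\int_1^\infty(x^4-1)^{-1/2}\,\rd x<\infty$. So the claim ${\rm c}(\cdot,\mu)\in{\rm C}^2([0,\infty))$ does not follow from the standing hypotheses of Section~\ref{SS4.2}.

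Of your two remedies, restricting to a fixed $R$ below the blow-up time does not serve the later use. Proposition~\ref{L21} and Lemma~\ref{L22} work with ${\rm c}(\cdot,\mu)$ on all of $[0,h_*]$ for every $\mu\in[0,S^*]$. But $h_*$ is fixed by $F(h_*,0)=DS^*$, does not depend on $r$, and may exceed the blow-up time of ${\rm c}(\cdot,S^*)$.

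A cleaner fix needs no new hypothesis. Replace $r$ on $(S^*,\infty)$ by its ${\rm C}^1$, nondecreasing, affine extension $s\mapsto r(S^*)+r'(S^*)(s-S^*)$. This has linear growth, so your Gronwall step gives global existence for every $\mu\in[0,S^*]$. Nothing in Section~\ref{SS4.2} changes: every equilibrium profile, and every shooting profile on the relevant range $[0,h(\mu)]$, takes values in $[0,S^*]$, where the two nonlinearities agree. The auxiliary monotonicity arguments on $[0,h_*]$ only use that the nonlinearity is ${\rm C}^1$ and nondecreasing.
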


Now, the aim is to show that there exists a unique $\mu\in [0,S^*]$ such that ${\rm c}(\cdot,\mu)$ solves~\eqref{eq} for a unique $h(\mu)>0$.

We first comply with the Dirichlet boundary condition at $z=h$ in \eqref{A4s}:

\begin{prop}\label{L21}
Assume~\eqref{GA}, \eqref{gg}, and~\eqref{NT}.
Let ${\rm c}(\cdot,\mu)\in {\rm C}^2([0,\infty))$  be the unique solution to the shooting problem~\eqref{shooting}.
Then, there exists a unique, strictly decreasing function $h\in {\rm C}^1([0,S^*])$  with $h(0)=h_*$ and $h(S^*)=0$ such that 
\begin{align}\label{id}
{\rm c}\big(h(\mu),\mu\big)=S\big(h(\mu)\big)\,,\quad \mu\in [0,S^*]\,.
\end{align}
\end{prop}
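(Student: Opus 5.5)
We define $\Phi(h,\mu):={\rm c}(h,\mu)-S(h)$ on $[0,h_*]\times[0,S^*]$, using $S\in{\rm C}^1([0,h_*])$ from Proposition~\ref{L20} and ${\rm c}$ from Lemma~\ref{L17}. For each fixed $\mu$ we want exactly one zero $h(\mu)\in[0,h_*]$, obtained by monotonicity in $h$, and then regularity and monotonicity in $\mu$ from the Implicit Function Theorem.

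\textbf{Monotonicity in $h$.} By Lemma~\ref{L17}, $h\mapsto {\rm c}(h,\mu)$ is nondecreasing, since $\partial_z{\rm c}\ge0$. By Proposition~\ref{L20}, $S$ is strictly decreasing, with $S'<0$ on $(0,h_*)$ as shown in its proof. Hence
\[
\partial_h\Phi(h,\mu)=\partial_z{\rm c}(h,\mu)-S'(h)>0,\qquad h\in(0,h_*),
\]
so $\Phi(\cdot,\mu)$ is strictly increasing and has at most one zero.

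\textbf{Boundary values.} At $h=0$ we have $\Phi(0,\mu)=\mu-S^*\le0$, with equality only if $\mu=S^*$. At $h=h_*$ we have $\Phi(h_*,\mu)={\rm c}(h_*,\mu)-0\ge\mu\ge0$, with equality only if $\mu=0$; in that case ${\rm c}(\cdot,0)\equiv0$ because $r(0)=0$. By the intermediate value theorem there is therefore a unique $h(\mu)\in[0,h_*]$ with $\Phi(h(\mu),\mu)=0$. Moreover $h(S^*)=0$, since $\Phi(\cdot,S^*)>0$ for $h>0$, and $h(0)=h_*$, since $\Phi(\cdot,0)=-S<0$ for $h<h_*$. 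For $\mu\in(0,S^*)$ the zero lies strictly inside $(0,h_*)$.

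\textbf{Regularity and monotonicity in $\mu$.} The map $\Phi$ is ${\rm C}^1$ by Lemma~\ref{L17} and Proposition~\ref{L20}. We have $\partial_\mu\Phi=w(h,\mu)\ge1>0$, and $\partial_h\Phi>0$ at interior zeros. The Implicit Function Theorem then gives $h\in{\rm C}^1((0,S^*))$ with
\[
h'(\mu)=-\frac{w(h(\mu),\mu)}{\partial_z{\rm c}(h(\mu),\mu)-S'(h(\mu))}<0,
\]
so $h$ is strictly decreasing.

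\textbf{Endpoints (main obstacle).} The delicate step is the endpoints $\mu=0$ and $\mu=S^*$, where the zero sits on the boundary $h\in\{0,h_*\}$, and where $\partial_h\Phi$ may vanish if $S'(0)=0$ or $S'(h_*)=0$. This is because Proposition~\ref{L20} gives strict negativity of $S'$ only via \eqref{c15}, which is proved on the open interval. There are two ways to handle this:
\begin{itemize}
\item[(i)] Extend $S$ as a ${\rm C}^1$ function slightly beyond $[0,h_*]$. For this we would check $\partial_SF(h,S(h))>0$ at $h=0$ and $h=h_*$. At $h=0$, \eqref{c14} gives $\partial_SF(0,S^*)=D>0$. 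At $h=h_*$, $S=0$, the sign follows from the convexity/concavity argument together with $F(h_*,S)>DS^*$ for $S>0$. Alternatively, we observe that $\partial_hF>0$ and $\partial_SF\ge0$ force $S'<0$ whenever $\partial_SF>0$.
\item[(ii)] Otherwise, use $\partial_\mu\Phi\ge1$ directly. Solve $\Phi(h,\mu)=0$ for $\mu=\mu(h)$, which is ${\rm C}^1$ on all of $[0,h_*]$ with $\mu'(h)=-\partial_h\Phi/w<0$ on $(0,h_*)$, and take $h=\mu^{-1}$.
\end{itemize}
With this, $h\in{\rm C}^1([0,S^*])$ is strictly decreasing with $h(0)=h_*$ and $h(S^*)=0$. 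Uniqueness follows from the strict monotonicity of $\Phi(\cdot,\mu)$.
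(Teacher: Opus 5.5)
Your core argument is the paper's. The paper also studies ${\rm c}(z,\mu)-S(z)$, gets a unique zero from strict monotonicity in $z$ and the signs at $z=0$ and $z=h_*$, applies the Implicit Function Theorem, and reads off $h'(\mu)<0$ from $\partial_\mu{\rm c}=w\ge1$. You are more careful than the paper at $\mu=0$, where ${\rm c}(h_*,0)=0$ rather than $>0$.

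The only real difference is your endpoint discussion. The paper simply uses $S'<0$ on the closed interval $[0,h_*]$, as asserted in the last line of the proof of Proposition~\ref{L20}. Your remark that \eqref{c15} is only stated on $(0,h_*)$ is fair, but neither of your two repairs works as written.

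Route (ii) only relocates the problem. Differentiating $\Phi(h,\mu(h))=0$ and using $\partial_z{\rm c}(0,\cdot)=0$, ${\rm c}(\cdot,0)\equiv0$ and $w(0,\cdot)=1$ gives $\mu'(0)=S'(0)$ and $\mu'(h_*)=S'(h_*)/w(h_*,0)$. So $\mu^{-1}$ is ${\rm C}^1$ at the endpoints exactly when $S'(0)\neq0$ and $S'(h_*)\neq0$, which is the very fact you wanted to avoid.

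Route (i) is fine at $h=0$, since $\partial_SF(0,S^*)=D$. At $h=h_*$, however, if $F(h_*,\cdot)$ is strictly convex, convexity together with $F(h_*,S)>DS^*$ for $S>0$ only yields $\partial_SF(h_*,0)\ge0$; think of $S\mapsto DS^*+S^2$. Your ``alternatively'' presupposes $\partial_SF>0$, so it is circular.

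The missing observation is short and uses only the statement of Proposition~\ref{L20}. Since $S\in{\rm C}^1([0,h_*])$ and $F(h,S(h))=DS^*$ on the closed interval, the chain rule gives $S'(h)\,\partial_SF(h,S(h))=-\partial_hF(h,S(h))$ for every $h\in[0,h_*]$. Moreover $\partial_hF(h,S(h))\ge\rho b/\kappa>0$ there, because all other terms are nonnegative by $d'\ge0$, $d\ge0$ and $k_2>\nu(S^*)\ge\nu(S)$. Hence $S'(h)\neq0$, and since $S$ is decreasing, $S'<0$ on all of $[0,h_*]$; no information on the sign of $\partial_SF$ is needed.

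Then $\partial_h\Phi>0$ on the whole rectangle. Either the Implicit Function Theorem at the boundary points, or your formula for $h'$, gives $h\in{\rm C}^1([0,S^*])$. The formula now has the finite limits $1/S'(0)$ as $\mu\to S^*$ and $w(h_*,0)/S'(h_*)$ as $\mu\to0$.
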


\begin{proof}
Define
$$
A(z,\mu):={\rm c}(z,\mu)-S(z)\,,\qquad z\in [0,h_*]\,,\quad \mu\in [0,S^*]\,.
$$
Then, for fixed $\mu\in [0,S^*)$,
$$
A(0,\mu)={\rm c}(0,\mu)-S(0)=\mu-S^*<0\,,\qquad A(h_*,\mu)={\rm c}(h_*,\mu)-S(h_*)={\rm c}(h_*,\mu)>0
$$
by Proposition~\ref{L20}. Moreover, 
$$
\partial_z A(z,\mu)=\partial_z{\rm c}(z,\mu)-S'(z)>0\,,\quad z\in (0,h_*]\,,
$$
since \(z\mapsto {\rm c}(z,\mu)\) is increasing by Lemma~\ref{L17} and \(S'(z)<0\) by Proposition~\ref{L20}. Therefore, for each $\mu\in [0,S^*]$ there is a unique $h(\mu)\in [0,h_*]$ such that $A(h(\mu),\mu)=0$ (with $h(0)=h_*$ and $h(S^*)=0$), and the Implicit Function Theorem yields that $h\in {\rm C}^1([0,S^*])$. Noticing that
\begin{align*}
0&=\partial_zA(h(\mu),\mu) h'(\mu)+\partial_\mu A\big(h(\mu),\mu\big)=\partial_zA\big(h(\mu),\mu\big) h'(\mu)+\partial_\mu {\rm c}\big(h(\mu),\mu\big)
\end{align*}
with $\partial_zA\big(h(\mu),\mu\big)>0$ and $\partial_\mu {\rm c}\big(h(\mu),\mu\big)\ge 1$, we derive that $h'(\mu)<0$ for $\mu\in [0,S^*)$.
\end{proof}

In order to comply with the Neumann condition at $z=h$ we will require the following auxiliary result:

\begin{lem}\label{L22}
Assume~\eqref{GA}, \eqref{gg} and~\eqref{NT}. Set $w(z,\mu):=\partial_\mu{\rm c}(z,\mu)$ and define
\begin{align*}
M(z,\mu)&:=-a\big[r({\rm c}(z,\mu))-b\big]w(z,\mu)+a\kappa\big(\partial_z{\rm c}(z,\mu)-S'(h(\mu))\big) \partial_z w(z,\mu)\\
&\qquad +\frac{k_2-\nu\big(S(h(\mu))\big)}{\alpha+k_2-\nu\big(S(h(\mu))\big)}d\big(h(\mu)\big)
\end{align*}
for $(z,\mu)\in [0,h_*]\times [0,S^*]$. Then, for fixed $\mu\in [0,S^*]$, the function
$$
M(\cdot,\mu):[0,h_*]\to \R
$$ 
is increasing. Moreover,
$M(0,\cdot):[0,S^*]\to\R$ with
$$
M(0,\mu)=-a\big(r(\mu)-b\big)+\frac{k_2-\nu\big(S(h(\mu))\big)}{\alpha+k_2-\nu\big(S(h(\mu))\big)}d\big(h(\mu)\big)\,,\quad \mu\in [0,S^*]\,,
$$
is strictly decreasing, and there exists (a unique) $\underline{\mu}\in (0,S^*)$ such that $M(0,\underline{\mu})=0$. In particular,
\begin{equation}\label{M}
M(z,\mu)\ge M(0,\mu)>0\,,\qquad z\in [0,h_*]\,,\quad \mu\in [0,\underline{\mu})\,.
\end{equation}
\end{lem}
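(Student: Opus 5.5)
The proof has two parts. First I show that $z\mapsto M(z,\mu)$ is monotone by differentiating in $z$ and simplifying with the two differential equations from Lemma~\ref{L17}. Then I study $M(0,\cdot)$ through the monotonicity of $h(\mu)$ and $S(h)$ from Propositions~\ref{L21} and~\ref{L20}.

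\emph{Monotonicity in $z$.} Fix $\mu\in[0,S^*]$ and write ${\rm c}={\rm c}(z,\mu)$, $w=w(z,\mu)$. The last summand of $M$ does not depend on $z$, so product rule gives
\begin{align*}
\partial_z M&=-a r'({\rm c})\,\partial_z{\rm c}\, w-a\big(r({\rm c})-b\big)\partial_z w+a\kappa\,\partial_z^2{\rm c}\,\partial_z w\\
&\qquad+a\kappa\big(\partial_z{\rm c}-S'(h(\mu))\big)\partial_z^2 w\,.
\end{align*}
I substitute $\kappa\partial_z^2{\rm c}=r({\rm c})$ from \eqref{A1ss} and $\kappa\partial_z^2 w=r'({\rm c})w$ from \eqref{A1ssm}. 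The terms $\pm a r({\rm c})\partial_z w$ cancel, and so do the terms $\pm a r'({\rm c})\partial_z{\rm c}\,w$. What remains is
$$
\partial_z M(z,\mu)=ab\,\partial_z w(z,\mu)-a\,S'(h(\mu))\,r'({\rm c}(z,\mu))\,w(z,\mu)\,.
$$
Each term is nonnegative: $a,b>0$; $\partial_z w\ge 0$ and $w\ge1$ by Lemma~\ref{L17}; $r'\ge0$ by \eqref{G1}; and $S'\le 0$ by Proposition~\ref{L20}. Hence $M(\cdot,\mu)$ is nondecreasing. For strictness I would use $\partial_z w(z,\mu)=\kappa^{-1}\int_0^z r'({\rm c})w\,\rd\xi$. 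This is the only point that needs care, but it is not required for \eqref{M}.

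\emph{The function $M(0,\cdot)$.} Since $w(0,\mu)=1$, $\partial_z w(0,\mu)=0$ and ${\rm c}(0,\mu)=\mu$, the stated formula for $M(0,\mu)$ follows at once. For strict decrease, the term $-a(r(\mu)-b)$ is nonincreasing because $r$ is nondecreasing. For the other term, Proposition~\ref{L21} says $h(\mu)$ is strictly decreasing, and Proposition~\ref{L20} says $S(\cdot)$ is decreasing. So $\mu\mapsto S(h(\mu))$ is nondecreasing, and then so is $\nu(S(h(\mu)))$ by \eqref{h1}. It follows that
$$
\phi(\mu):=\frac{k_2-\nu(S(h(\mu)))}{\alpha+k_2-\nu(S(h(\mu)))}=1-\frac{\alpha}{\alpha+k_2-\nu(S(h(\mu)))}
$$
is nonincreasing. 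It is also strictly positive, since $k_2>\nu(S^*)\ge\nu(S)$ by \eqref{h2}.

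Next, $\psi(\mu):=d(h(\mu))$ is strictly decreasing, because $d$ is strictly increasing, and $\psi\ge0$. Then for $\mu_1<\mu_2$,
$$
\phi(\mu_1)\psi(\mu_1)\ge\phi(\mu_2)\psi(\mu_1)>\phi(\mu_2)\psi(\mu_2)\,.
$$
Thus the product $\phi\psi$ is strictly decreasing, and therefore so is $M(0,\cdot)$.

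\emph{Endpoint signs and conclusion.} At $\mu=0$ we have $h(0)=h_*$ and $S(h_*)=0$, so $\nu=0$ and
$$
M(0,0)=ab+\frac{k_2\,d(h_*)}{\alpha+k_2}>0\,.
$$
At $\mu=S^*$ we have $h=0$ and $S(0)=S^*$. Using $g(r(S^*))=a(r(S^*)-b)$ from \eqref{gg}, the condition \eqref{h2} is exactly
$$
M(0,S^*)=-g(r(S^*))+\frac{(k_2-\nu(S^*))\,d(0)}{\alpha+k_2-\nu(S^*)}<0\,.
$$
By continuity and strict monotonicity there is a unique $\underline{\mu}\in(0,S^*)$ with $M(0,\underline\mu)=0$, and $M(0,\mu)>0$ for $\mu<\underline\mu$. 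Combined with the monotonicity in $z$, this gives \eqref{M}.

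The main obstacle is the algebraic cancellation in $\partial_z M$; the definition of $M$ is evidently designed so that it happens. Once that identity is in hand, everything else follows from sign information already established in Lemma~\ref{L17} and Propositions~\ref{L20} and~\ref{L21}.
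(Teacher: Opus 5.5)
Your proof is correct and follows the paper's argument: the same cancellation giving $\partial_z M=ab\,\partial_z w-aS'(h(\mu))\,r'({\rm c})\,w\ge 0$, and the same endpoint signs $M(0,0)>0>M(0,S^*)$ via \eqref{h2}. The only difference is in the strict decrease of $M(0,\cdot)$: the paper differentiates term by term, while your product estimate $\phi(\mu_1)\psi(\mu_1)\ge\phi(\mu_2)\psi(\mu_1)>\phi(\mu_2)\psi(\mu_2)$ gives the strictness more cleanly. One caveat: do not try to upgrade $M(\cdot,\mu)$ to strictly increasing, since this can fail under \eqref{G1}. For example, $\mu=0$ with $r'(0)=0$ gives ${\rm c}(\cdot,0)\equiv 0$, $w(\cdot,0)\equiv 1$ and $\partial_z M(\cdot,0)\equiv 0$, so ``increasing'' must be read as nondecreasing, exactly as the paper proves it.
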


\begin{proof}
It readily follows from Lemma~\ref{L17}  that
\begin{align*}
\partial_z M(z,\mu)&=ab\partial_z w(z,\mu)-aS'\big(h(\mu)\big)r'\big({\rm c}(z,\mu)\big)w(z,\mu)\ge 0
\end{align*}
for $(z,\mu)\in [0,h_*]\times [0,S^*]$, where we used the monotonicity properties of $w$, $r$, and $S$ (see~\eqref{GA}, Proposition~\ref{L20}, and Lemma~\ref{L17}). This ensures that \mbox{$M(\cdot,\mu):[0,h_*]\to \R$} is increasing.

Next, recalling that $w(0,\mu)=1$ and $\partial_zw(0,\mu)=0$, we have
$$
M(0,\mu)=-a\big(r(\mu)-b\big)+\frac{k_2-\nu\big(S(h(\mu))\big)}{\alpha+k_2-\nu\big(S(h(\mu))\big)}d\big(h(\mu)\big)\,,\quad \mu\in [0,S^*]\,.
$$
Since $r(0)=0$ by~\eqref{GA}, $\nu(0)=0$ by~\eqref{h1}, $S(h_*)=0$ and $h(0)=h_*$ by Proposition~\ref{L20} and, respectively, Proposition~\ref{L21}, we obtain that
$$
M(0,0)=ab+\frac{k_2}{\alpha+k_2}d(h_*)>0
$$
and, recalling $h(S^*)=0$ and $S(0)=S^*$ and using \eqref{gg} and~\eqref{h2}, that
$$
M(0,S^*)=-a\big(r(S^*)-b\big)+\frac{k_2-\nu(S^*)}{\alpha+k_2-\nu(S^*)}d(0)<0\,.
$$
Thus, the continuity of $M(0,\cdot)$ implies the existence of $\underline{\mu}\in (0,S^*)$ such that $M(0,\underline{\mu})=0$. In fact, since
\begin{align*}
\frac{\rd}{\rd \mu}M(0,\mu)&=-a r'(\mu)-\alpha\frac{d\big(h(\mu)\big)}{\big(\alpha+k_2-\nu\big(S(h(\mu))\big)\big)^2} \frac{\rd}{\rd \mu}\nu\big(S(h(\mu))\big)\\
&\qquad+\frac{k_2-\nu\big(S(h(\mu))\big)}{\alpha+k_2-\nu\big(S(h(\mu))\big)}\frac{\rd}{\rd \mu}d\big(h(\mu)\big)
\end{align*}
and since $r$ and $\nu$ are  nondecreasing, $d$ is strictly increasing, while $S$ and $h$ are strictly decreasing, we deduce from assumption~\eqref{h2} that~$M(0,\cdot)$ is strictly decreasing on $[0,S_*]$. This implies the assertion.
\end{proof}

Guided by~\eqref{A4s}, we now introduce
$$
B(\mu):=a\kappa \partial_z {\rm c}\big(h(\mu),\mu\big)-abh(\mu)-\frac{k_2-\nu\big(S(h(\mu))\big)}{\alpha+k_2-\nu\big(S(h(\mu))\big)}d\big(h(\mu)\big)h(\mu)\,,\quad \mu\in [0,S^*]\,.
$$
By construction, the zeros of $B$ represent  all stationary solutions to~\eqref{C}. In fact, besides the trivial equilibrium corresponding to $\mu=S^*$, there is exactly one other equilibrium, namely the nontrivial equilibrium corresponding to some~$\mu_*\in (0,\underline{\mu}]$:

\begin{prop}\label{L40}
Assume~\eqref{GA}, \eqref{gg} and~\eqref{NT}. Then, there is $\mu_*\in (0,\underline{\mu}]$ such that $\mu=S^*$ and $\mu=\mu_*$ are the only zeros of the function $B:[0,S^*]\to \R$.
\end{prop}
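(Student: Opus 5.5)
The plan is to combine sign information at the endpoints with strict monotonicity of $B$ on $[0,\underline{\mu}]$ (via the function $M$ of Lemma~\ref{L22}) and strict positivity of $B$ on $[\underline{\mu},S^*)$. Throughout, write $\Phi(S):=\frac{k_2-\nu(S)}{\alpha+k_2-\nu(S)}$. By \eqref{h1}--\eqref{h2}, $\Phi>0$ on $[0,S^*]$, and $\Phi$ is nonincreasing in $S$ because $\Phi'(S)=-\alpha\nu'(S)/(\alpha+k_2-\nu(S))^2\le 0$.

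\emph{Endpoints.} Since $h(S^*)=0$ by Proposition~\ref{L21}, we get $B(S^*)=a\kappa\partial_z{\rm c}(0,S^*)=0$, which is the trivial equilibrium. For $\mu=0$ we have ${\rm c}(\cdot,0)\equiv 0$ because $r(0)=0$, and $h(0)=h_*$, $S(h_*)=0$. Hence
\[
B(0)=-abh_*-\frac{k_2}{\alpha+k_2}d(h_*)h_*<0 .
\]

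\emph{No zeros in $[\underline{\mu},S^*)$ except possibly $\underline{\mu}$.} Integrate \eqref{A1ss} and use the monotonicity of ${\rm c}(\cdot,\mu)$ from Lemma~\ref{L17}:
\[
\kappa\partial_z{\rm c}(h(\mu),\mu)=\int_0^{h(\mu)}r({\rm c}(z,\mu))\,\rd z\ge h(\mu)\,r(\mu).
\]
This gives
\[
B(\mu)\ge h(\mu)\Big[a\big(r(\mu)-b\big)-\Phi\big(S(h(\mu))\big)d(h(\mu))\Big]=-h(\mu)\,M(0,\mu).
\]
By Lemma~\ref{L22}, $M(0,\cdot)$ is strictly decreasing with zero $\underline{\mu}$, so $M(0,\mu)<0$ for $\mu>\underline{\mu}$. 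Since $h(\mu)>0$ for $\mu<S^*$, we obtain $B(\mu)>0$ on $(\underline{\mu},S^*)$ and $B(\underline{\mu})\ge 0$. Combined with $B(0)<0$, the intermediate value theorem yields a zero $\mu_*\in(0,\underline{\mu}]$.

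\emph{Uniqueness on $[0,\underline{\mu}]$.} I will show $B'(\mu)>0$ on $[0,\underline{\mu})$. Differentiating \eqref{id} gives
\[
\big(\partial_z{\rm c}(h,\mu)-S'(h)\big)h'(\mu)=-w(h,\mu),
\]
where $h=h(\mu)$ and $w=\partial_\mu{\rm c}$. Differentiating $B$ and using $a\kappa\partial_z^2{\rm c}=ar({\rm c})$, I compute
\[
B'(\mu)=h'\big[a(r({\rm c})-b)\big]+a\kappa\,\partial_zw-\frac{\rd}{\rd\mu}\Big[\Phi\big(S(h)\big)d(h)h\Big],
\]
all evaluated at $z=h(\mu)$. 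I multiply through by the positive factor $\partial_z{\rm c}-S'(h)$ and replace $(\partial_z{\rm c}-S')h'$ by $-w$. The result should be
\[
\big(\partial_z{\rm c}-S'\big)B'=M(h(\mu),\mu)+(w-1)\Phi d+w\,h\big[\Phi'(S)S'(h)\,d+\Phi\, d'\big].
\]
The last two terms are nonnegative, because $w\ge1$, $\Phi>0$, $\Phi'\le0$, $S'<0$, and $d'\ge0$. Lemma~\ref{L22} then gives $M(h(\mu),\mu)\ge M(0,\mu)>0$ for $\mu<\underline{\mu}$. Hence $B$ is strictly increasing on $[0,\underline{\mu}]$, so $\mu_*$ is its unique zero there. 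Together with the previous step, $\mu_*$ and $S^*$ are the only zeros of $B$.

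The main obstacle is the bookkeeping in this derivative identity. The signs of the terms coming from differentiating $\Phi(S(h(\mu)))$ and $d(h(\mu))$ must come out exactly as claimed, and the combination must reproduce the definition of $M$ in Lemma~\ref{L22} with only nonnegative remainders. If the remainder does not match exactly, I would fall back on showing $B'(\mu_0)>0$ only at zeros $\mu_0$ of $B$, using $B(\mu_0)=0$ to eliminate $\partial_z{\rm c}(h,\mu_0)$. That still suffices to exclude a second crossing.
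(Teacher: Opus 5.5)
Your proposal is correct and follows the paper's proof essentially step by step. Both arguments use the endpoint values $B(0)<0$ and $B(S^*)=0$, the lower bound $B(\mu)\ge -h(\mu)M(0,\mu)>0$ on $(\underline{\mu},S^*)$ obtained from $\kappa\partial_z{\rm c}(h(\mu),\mu)\ge r(\mu)h(\mu)$, and strict monotonicity of $B$ on $[0,\underline{\mu})$, obtained by multiplying $B'$ by $\partial_z{\rm c}-S'(h)>0$ and comparing with $M(h(\mu),\mu)\ge M(0,\mu)>0$. Your exact identity with the nonnegative remainder $(w-1)\Phi d+wh\,[\Phi'(S)S'(h)d+\Phi d']$ is precisely what the paper discards to get its inequality, so your fallback argument is not needed.
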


\begin{proof}
From $h(0)=h_*$, $S(h_*)=0$, $\nu(0)=0$, and $\partial_z {\rm c}(\cdot,0)\equiv 0$ we obtain
$$
B(0)=-abh_*-\frac{k_2}{\alpha+k_2}d(h_*)h_*<0
$$
while $h(S^*)=0$ and $\partial_z {\rm c}(0,\cdot)\equiv 0$ entail that
$B(S^*)=0$. Next, observe from~Lemma~\ref{L17} that ${\rm c}(z,\mu)\ge {\rm c}(0,\mu)=\mu$ for $z\ge 0$ and $\mu\in [0,S^*]$, hence, by the monotonicity of $r$,
$$
\kappa \partial_z {\rm c}\big(h(\mu),\mu\big)=\int_0^{h(\mu)}r\big({\rm c}(z,\mu)\big)\,\rd z\ge r(\mu)h(\mu)
$$
for $\mu\in [0,S^*]$. Therefore,
\begin{align*}
B(\mu)\ge -h(\mu) M(0,\mu)>0\,,\quad \mu \in (\underline{\mu},S^*)\,,
\end{align*}
by Lemma~\ref{L22}. Finally, we show that $B$ is strictly increasing on $[0,\underline{\mu})$. To this end, we first differentiate~\eqref{id} with respect to $\mu$ and use Lemma~\ref{L17} to derive that
\begin{align}\label{stst}
\big[\partial_z{\rm c}(h(\mu),\mu)-S'(h(\mu))\big] h'(\mu)=-\partial_\mu{\rm c}(h(\mu),\mu)=-w(h(\mu),\mu)\le -1
\end{align}
for $\mu\in [0,S^*]$. Using the identity~\eqref{stst} and \eqref{A1ss}, we obtain that
\begin{align*}
\big[\partial_z{\rm c}(h(\mu),\mu)-&S'(h(\mu))\big]\frac{\rd}{\rd\mu}B(\mu)\\
&=-a\big[ r\big({\rm c}(h(\mu),\mu)\big)-b\big] w\big(h(\mu),\mu\big)\\
&\quad +a\kappa \big[\partial_z{\rm c}(h(\mu),\mu)-S'(h(\mu))\big]\partial_zw\big(h(\mu),\mu\big) \\
&\quad +\frac{k_2-\nu\big(S(h(\mu))\big)}{\alpha+k_2-\nu\big(S(h(\mu))\big)}\big[d'\big(h(\mu)\big)h(\mu)+d\big(h(\mu)\big)\big]w\big(h(\mu),\mu\big)\\
&\quad -\alpha\frac{\nu'\big(S(h(\mu))\big)S'(h(\mu))}{\big(\alpha+k_2-\nu\big(S(h(\mu))\big)\big)^2}d\big(h(\mu)\big)h(\mu)w\big(h(\mu),\mu\big)\\
&\ge -a\big[ r\big({\rm c}(h(\mu),\mu)\big)-b\big] w\big(h(\mu),\mu\big)\\
&\quad +a\kappa \big[\partial_z{\rm c}(h(\mu),\mu)-S'(h(\mu))\big]\partial_zw\big(h(\mu),\mu\big) \\
&\quad +\frac{k_2-\nu\big(S(h(\mu))\big)}{\alpha+k_2-\nu\big(S(h(\mu))\big)}d\big(h(\mu)\big) \\
&= M\big(h(\mu),\mu)\,,
\end{align*}
where we used for the inequality that $\nu$ and $d$ are increasing and that $S$ is decreasing along with \eqref{H4} to drop the last term and part of the second-to-last term. Therefore, since $\partial_z{\rm c}(h(\mu),\mu)-S'(h(\mu))>0$ for $\mu\in [0,S^*]$ by \eqref{stst} and the monotonicity of $h$, we conclude from~\eqref{M} that $B'(\mu)>0$ for $\mu\in [0,\underline{\mu})$. 

Consequently, the function $B$ is strictly increasing on $[0,\underline{\mu})$, satisfies $B(0)<0$ while $B(\mu)>0$ for $\mu \in (\underline{\mu},S^*)$. It thus follows that there exists $\mu_*\in (0,\underline{\mu}]$ such that $\mu_*$ is the only zero of $B$ in the interval $[0,S^*)$ as claimed.
\end{proof}

Theorem~\ref{T5} is now a consequence of Proposition~\ref{L40} and the previous considerations.

\subsection{Stability of the Nontrivial Equilibrium}

Finally, we provide conditions that ensure the local asymptotic stability of the unique nontrivial equilibrium
$$
(h_\star,S_\star,Q_\star)\in (0,\infty)\times (0,S^*)\times (0,\infty)\,,\qquad  c_\star\in {\rm C}^2([0,h_\star])\big)
$$ 
to \eqref{C} established in the previous Subsection~\ref{SS4.2}. To this end we impose the same assumptions as therein but strengthen condition~\eqref{h3a} to 
\begin{equation}\label{hh3}
\max\left\{\frac{1}{2a},\frac{b}{\alpha+k_2},\frac{\alpha}{a(\alpha+k_2)}\right\}\le \frac{\beta k_1\kappa}{\rho}\le \frac{2\alpha}{a\nu(S^*)}\,.
\end{equation}
Again we point out that condition~\eqref{hh3}  constitutes structural assumption that is largely independent of the magnitude of the inlet concentration $S^*$.
We derive the local stability result:

\begin{thm}\label{T6}
Assume~\eqref{GA}, \eqref{gg},~\eqref{h1},~\eqref{h2}, and~\eqref{hh3}.
Then, the unique nontrivial equilibrium solution 
$$
(h_\star,S_\star,Q_\star)\in (0,\infty)\times (0,S^*)\times (0,\infty)
$$ 
to \eqref{C} with $c_\star\in {\rm C}^2([0,h_\star])\big)$ provided by Theorem~\ref{T5} is locally asymptotically  stable in~$\R^3$.
\end{thm}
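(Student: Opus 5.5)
We apply the Principle of Linearized Stability to the reduced ODE system $X'=f(X)$ from~\eqref{CP}, with $f$ given by~\eqref{f}, at $X_\star=(h_\star,S_\star,Q_\star)$. We need the $3\times 3$ Jacobian $J=\partial f(X_\star)$ and then the Routh--Hurwitz criterion for its characteristic polynomial $\lambda^3+A_1\lambda^2+A_2\lambda+A_3$: all roots have negative real part iff $A_1>0$, $A_3>0$ and $A_1A_2>A_3$.

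First we compute $J$, using the special structure~\eqref{gg}. Integrating~\eqref{H1} gives $h\int_0^1 r(u[h,S])\,\rd y=\kappa\,\partial_y u[h,S](1)/h=:\Phi(h,S)$. Hence
\[
f_1=a\Phi-abh+\tfrac{\alpha}{\beta}Q-d(h)h,\qquad f_2=D(S^*-S)-k_1Q\nu(S)-\tfrac{\rho}{\kappa}\Phi .
\]
So the only nonexplicit quantities are $\Phi_h:=\partial_h\Phi(h_\star,S_\star)$ and $\Phi_S:=\partial_S\Phi(h_\star,S_\star)$. By Proposition~\ref{P1}, $\Phi_S=h_\star\int_0^1 r'(u_\star)\partial_Su\,\rd y\ge 0$. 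We also need sign and size information on $\Phi_h$. In the original variables, $\Phi(h,S)=\int_0^h r(c)\,\rd z$. Differentiating in $h$ gives the boundary term $r(S)$ plus an interior term that is nonpositive by Corollary~\ref{Cor1}~(b). Comparing with the shooting solution shows $0\le\Phi_h\le r(S_\star)$.

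Next we use the equilibrium identities~\eqref{Q}, \eqref{gp}, \eqref{A4s} to eliminate $Q_\star$ and $a\Phi-abh$. Write $\sigma:=\alpha+k_2-\nu(S_\star)>0$ and $p:=d(h_\star)+d'(h_\star)h_\star>0$ (by~\eqref{h1}). The entries are then
\[
J_{11}=a\Phi_h-ab-p,\quad J_{12}=a\Phi_S,\quad J_{13}=\tfrac{\alpha}{\beta},
\]
\[
J_{21}=-\tfrac{\rho}{\kappa}\Phi_h,\quad J_{22}=-D-k_1Q_\star\nu'-\tfrac{\rho}{\kappa}\Phi_S,\quad J_{23}=-k_1\nu(S_\star),
\]
\[
J_{31}=\beta p,\quad J_{32}=\nu'Q_\star,\quad J_{33}=-\sigma .
\]
Here $\nu'=\nu'(S_\star)$. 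The crucial sign is $J_{11}$. At equilibrium, $a\Phi/h_\star=ab+\frac{(k_2-\nu)d}{\sigma}$. Since $\Phi$ is convex in $h$ (because $c$ is increasing in $z$), we have $\Phi_h\le$ roughly $\Phi/h_\star+\text{correction}$. We will try to show $J_{11}<0$ via $a\Phi_h-ab\le a(r(S_\star)-b)$ combined with the comparison $M(h(\mu_\star),\mu_\star)\ge 0$ from Lemma~\ref{L22}, since $\mu_\star\le\underline\mu$ encodes $a(r(\mu_\star)-b)\le \frac{(k_2-\nu)d}{\sigma}$. This is where the shooting information from Proposition~\ref{L40} gets used.

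Then we compute the coefficients: $A_1=-\operatorname{tr}J$, $A_2$ as the sum of principal $2\times2$ minors, and $A_3=-\det J$. The diagonal entries $J_{22},J_{33}<0$ and, hopefully, $J_{11}<0$ make $A_1>0$. For $A_2$ and $A_3$, the dangerous cross terms are the products $J_{12}J_{21}=-\frac{a\rho}{\kappa}\Phi_S\Phi_h$, $J_{13}J_{31}=\alpha p$, and $J_{23}J_{32}=-k_1\nu\nu'Q_\star$. The $2\times 2$ minor $J_{11}J_{33}-\alpha p$ involves $\sigma(p-a\Phi_h+ab)-\alpha p=(k_2-\nu)p+\sigma(ab-a\Phi_h)$, which is positive by~\eqref{h2} once $a\Phi_h\le ab+\dots$ is controlled. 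The minor $J_{11}J_{22}-J_{12}J_{21}$ requires comparing $\frac{a\rho}{\kappa}\Phi_S\Phi_h$ with $\frac{\rho}{\kappa}\Phi_S(p+ab-a\Phi_h)$, which holds when $2a\Phi_h\le p+ab$. This is the point where the bounds $\frac{1}{2a}\le\frac{\beta k_1\kappa}{\rho}$ and $\frac{b}{\alpha+k_2}\le\frac{\beta k_1\kappa}{\rho}$ in~\eqref{hh3} are intended to enter, through the relation between $Q_\star$, $\Phi$ and $d(h_\star)h_\star$ from~\eqref{A6}. The determinant and the Hurwitz inequality $A_1A_2>A_3$ are then checked term by term. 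In the mixed terms containing $J_{13}J_{21}J_{32}$ and $J_{12}J_{23}J_{31}$, the upper bound $\frac{\beta k_1\kappa}{\rho}\le\frac{2\alpha}{a\nu(S^*)}$ is used to absorb $k_1\nu(S_\star)\cdot a\Phi_S\cdot\beta p$ into $\frac{\rho}{\kappa}\Phi_S\cdot\alpha p$-type terms. This algebra is lengthy and will be placed in Appendix~\ref{App}.

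The main obstacle will be the sign control of $J_{11}$, that is, the bound $\Phi_h$ versus $b+\frac{p}{a}$, together with the Hurwitz inequality $A_1A_2-A_3>0$. For the former, I would first try a direct estimate. In the variable $z$, $\Phi_h=r(S_\star)+\int_0^{h_\star} r'(c)\,\partial_h c\,\rd z$, and $\partial_h c$ is expressed through the shooting derivative $w$. Then $a(r(S_\star)-b)$ is related to $M$, giving $\Phi_h\le r(\mu_\star)+S'$-corrections. Only if that fails would I fall back on the identity~\eqref{stst}.
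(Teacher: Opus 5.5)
Your overall route, linearizing $X'=f(X)$ at $X_\star$ and applying Routh--Hurwitz, is the paper's, and your Jacobian entries are correct. But the proposal stops short of a proof, and the step you single out as crucial is attacked with arguments that fail. First, $\Phi$ is not convex in $h$: for $r(c)=c$, $\kappa=1$ one finds $\Phi(h,S)=S\tanh h$. Moreover, convexity with $\Phi(0,S)=0$ would give $\Phi_h\ge \Phi/h_\star$, which is the wrong direction. Second, the bound $a\Phi_h-ab\le a(r(S_\star)-b)$ is too lossy. The relation \eqref{gp} controls only the average $\int_0^1 r(u_\star)\,\rd y$, which can be much smaller than $r(S_\star)$. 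Meanwhile $M(0,\mu_\star)\ge 0$ bounds only the even smaller $r(\mu_\star)$, so these two estimates cannot be combined.

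The missing observation is the exact identity, in the $y$-variables,
\[
\Phi_h=\int_0^1 r(u_\star)\,\rd y+h_\star H\,,\qquad H:=\int_0^1 r'(u_\star)\,\partial_h u_\star\,\rd y\le 0
\]
by Corollary~\ref{Cor1}(b). Inserting \eqref{gp} gives $J_{11}=ah_\star H-\alpha d(h_\star)/\Delta-d'(h_\star)h_\star<0$ immediately, with no shooting input. The bound $\Phi_h\le r(\mu_\star)$ you gesture at is in fact true, with no $S'$-corrections: the Wronskian of $w(\cdot,\mu_\star)$ and $\partial_z{\rm c}(\cdot,\mu_\star)$ is constant and yields $\Phi_h=r(\mu_\star)/w(h_\star,\mu_\star)$. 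However, you neither state nor prove it, and it is not needed. The identity above, together with $Q_\star=\beta d(h_\star)h_\star/\Delta$, is also what turns every Hurwitz quantity into a sum of terms whose signs can be read off from $H\le 0$ and $\Phi_S>0$.

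Beyond $J_{11}$, the real content of the theorem, namely $A_3>0$ and above all $A_1A_2-A_3>0$, is only asserted. Your account of where \eqref{hh3} enters is also partly wrong. The minor $J_{11}J_{22}-J_{12}J_{21}$ needs no hypothesis at all. After the substitution the $\Phi_S H$-terms cancel and it equals $\bigl(-ah_\star H+\frac{\alpha d(h_\star)}{\Delta}+d'(h_\star)h_\star\bigr)\bigl(D+k_1Q_\star\nu'(S_\star)\bigr)+\frac{\rho}{\kappa}\Phi_S\bigl(d(h_\star)+d'(h_\star)h_\star+ab\bigr)$. With your own claim $\Phi_h\ge 0$ one also sees directly that $-J_{12}J_{21}=\frac{a\rho}{\kappa}\Phi_S\Phi_h\ge 0$, so your condition $2a\Phi_h\le p+ab$ stems from a sign error.

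In the paper, \eqref{hh3} is used in two places:
\begin{itemize}
\item in $A_3$, for the coefficient $a\beta k_1\kappa(\alpha+k_2)-\alpha\rho$ of an $H$-term;
\item in $A_1A_2-A_3$, for the coefficients $2\alpha\rho-a\beta k_1\kappa\nu(S_\star)$, $2\Delta\rho-a\beta k_1\kappa\nu(S_\star)$, $2a\beta k_1\kappa-\rho$ and $\Delta\beta k_1\kappa(\alpha+k_2)-\alpha b\rho$.
\end{itemize}
Your heuristic for the upper bound in \eqref{hh3}, dominating $a\beta k_1\nu(S_\star)\Phi_S\,p$ by $\frac{\rho}{\kappa}\Phi_S$-terms, is the right one. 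But that term has a favourable sign in $A_3$ and only becomes dangerous in $A_1A_2-A_3$. Until that full expansion is written out and each term's sign is checked, the Hurwitz inequality remains unproved.
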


\begin{proof}
Observing that~\eqref{hh3} implies~\eqref{h3a}, Theorem~\ref{T5} ensures the existence and uniqueness of the nontrivial equilibrium.
Recall then from~\eqref{h1} and~\eqref{h2} that
\begin{equation}\label{Delta}
\Delta:=\alpha+k_2-\nu(S_\star)>\alpha+k_2-\nu(S^*)>\alpha>0\,. 
\end{equation}
As in Section~\ref{Sec2} we use dimensionless variables (see~\eqref{dvar}) and write
$u(y)=c\bigl(yh\bigr)$, in particular $u_\star(y)=c_\star\bigl(yh_\star\bigr)$. In fact, from Proposition~\ref{P1} we recall that $[h,S]\mapsto u[h,S]$ is of class $C^1$ and $u_\star=u_\star[h_\star,S_\star]$.
Define $f$ as the right-hand side of the ODE-system \eqref{H3}-\eqref{H5} as in~\eqref{f}.
Under assumption~\eqref{gg} we then compute the Jacobian $A:=\partial f(h_\star,S_\star,Q_\star)$ of $f$  at the equilibrium $(h_\star,S_\star,Q_\star)$ as
\begin{equation}\label{MA}
A=
\left(\begin{matrix}
a   h_\star   H -\frac{\alpha  d(h_\star)}{\Delta}-d'(h_\star)  h_\star\quad  & a   h_\star   G\quad & \frac{\alpha}{\beta}\\[4pt]
-\frac{\rho (\Delta-\alpha)d(h_\star)}{a   \kappa    \Delta}-\frac{b   \rho}{\kappa}-\frac{\rho  h_\star}{\kappa}  H \quad &-D-\frac{k_1   \nu'(S_\star)  \beta  d(h_\star)   h_\star}{\Delta}-\frac{\rho  h_\star}{\kappa}  G\quad  & -k_1   \nu(S_\star)\\[4pt]
\beta  d(h_\star) +\beta  d'(h_\star)  h_\star\quad &\frac{\beta  d(h_\star)   h_\star   \nu'(S_\star)}{\Delta}&\quad -\Delta
\end{matrix}\right)\,,
\end{equation}
where
\begin{equation}\label{GH}
H:=\int_0^1 r'(u_\star(y))\partial_h u_\star(y)\,\rd y\le 0\,,\qquad G:=\int_0^1 r'(u_\star(y))\partial_S u_\star(y)\,\rd y> 0
\end{equation}
according to Corollary~\ref{Cor1}.  In order to show that all eigenvalues of $A$ have a negative real part under assumptions~\eqref{h1},~\eqref{h2}, and~\eqref{hh3} one may apply the Routh-Hurwitz Criterion. We postpone the tedious details to the Appendix~\ref{App}. 
We thus conclude that the nontrivial equilibrium~$(h_\star,S_\star,Q_\star)$ is indeed locally asymptotically  stable. 
\end{proof}

One may expect the nontrivial equilibrium to be \emph{globally} asymptotically stable in $\mathbb{R}^3$ (at least in a parameter regime compatible with Theorem~\ref{T6}). This conjecture is supported by the numerical simulations depicted in Figure~\ref{fig:case3} (see also~\cite{MasicEberl12}) and is consistent with Proposition~\ref{PX}.

\begin{appendix}

\nequation
\aequation
 
\section{Numerical Simulations}\label{AppN}
We present some basic numerical simulations of system~\eqref{H} illustrating the analytical results of Sections~\ref{Sec3} and~\ref{Sec4}. The simulations were performed in Mathematica.

Throughout, we adopt Monod kinetics, linear net growth ($a=1$ in \eqref{gg}), and linear detachment:
\begin{equation*}
r(c)=\frac{\mu_r \, c}{K_r+c}\,,\qquad
\nu(S)=\frac{\mu_\nu \, S}{K_\nu+S}\,,\qquad
g(r)=r-b\,,\qquad
d(h)=d_0 h\,.
\end{equation*}
These choices satisfy the structural assumptions~\eqref{GA} and~\eqref{gg}. We use the parameters in Table~\ref{tab:params} for illustrative purposes our analytical results only. We refer to \cite{MasicEberl12} for biologically calibrated values.
\\[0.3cm]

\begin{table}[H]
\centering
\renewcommand{\arraystretch}{1.3}
\begin{tabular}{clc}
\hline
\textbf{Symbol} & \textbf{Description} & \textbf{Value}\\
\hline
$\mu_r$ &  maximum specific growth rate (biofilm) & $4$\\
$K_r$ &  half-saturation constant (biofilm) & $1$\\
$\mu_\nu$ & maximum specific growth rate (planktonic) & $2$\\
$K_\nu$ & half-saturation constant (planktonic) & $1$\\
$b$ & biofilm death rate in $g(r)=r-b$ & $2$\\
$d_0$ & detachment coefficient in $d(h)=d_0 h$ & $1$\\
$\kappa$ & diffusion coefficient & $1$\\
$D$ & dilution rate & $1$\\
$k_1$ & substrate consumption parameter & $1$\\
$k_Q$ & planktonic death rate & $1$\\
$\alpha$ & attachment rate & $1$\\
$\rho$ & biomass density parameter & $1$\\
$\beta$ & conversion factor & $1$\\
$k_2$ & $=D+k_Q$ \quad (derived) & $2$\\
\hline
\end{tabular}

\vspace{0.2cm}

\caption{Model parameters used in the numerical simulations.}
\label{tab:params}
\end{table}

\newpage

\begin{figure}[H]
\centering
\includegraphics[width=0.62\textwidth]{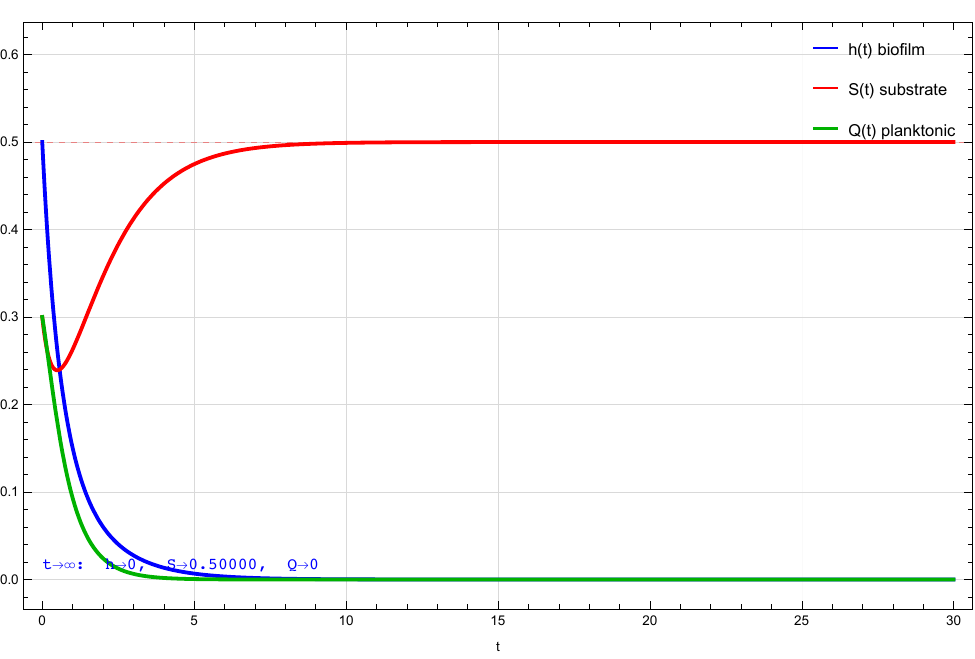}
\caption{Convergence to the washout equilibrium $(0,\,S^*,\,0)=(0,\,0.5,\,0)$. 
The conditions of Corollary~\ref{C10} are satisfied with $d(0)=0$, $g(r(S^*))=-2/3<0$, and $\nu(S^*)=2/3<k_2=2$. Initial conditions: $(h_0,S_0,Q_0)=(0.5,\,0.3,\,0.3)$.}
\label{fig:case1}
\end{figure}

\begin{figure}[H]
\centering
\includegraphics[width=0.62\textwidth]{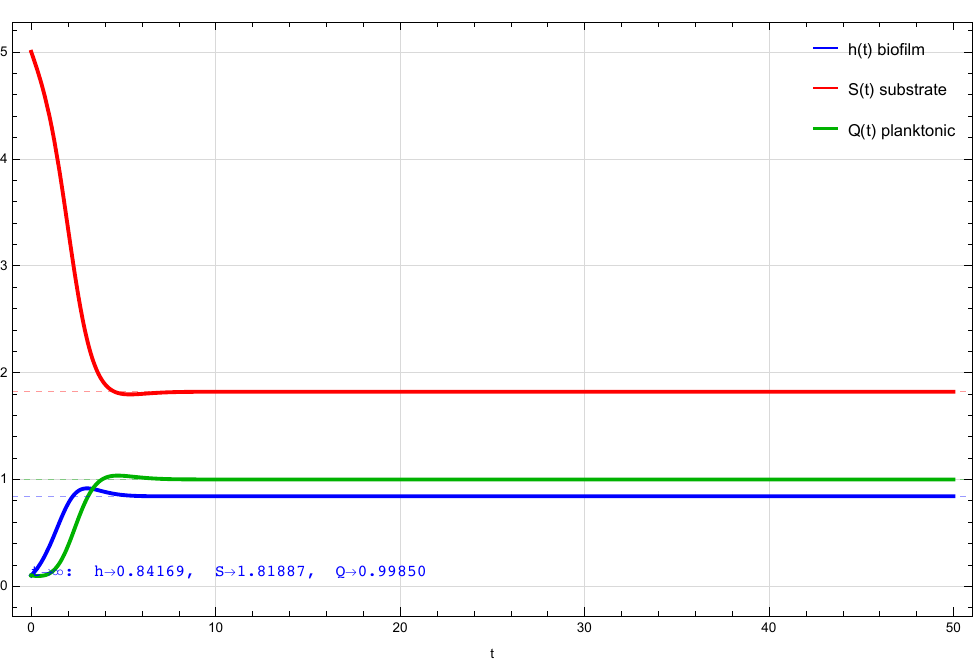}
\caption{Convergence to the unique nontrivial equilibrium for $S^*=5$.
The washout equilibrium is unstable since $g(r(S^*))=4/3>0$ (cf.\ Remark~\ref{R1} with $d(0)=0$). Existence and uniqueness follow from Theorem~\ref{T5} with $\alpha+k_2=3>\nu(S^*)=5/3$.
Initial conditions: $(h_0,S_0,Q_0)=(0.1,\,5.0,\,0.1)$.
Dashed lines indicate the numerically determined equilibrium $(h_\star,S_\star,Q_\star)\approx (0.923,2.118,0.518)$.}
\label{fig:case2}
\end{figure}

\begin{figure}[H]
\centering
\includegraphics[width=\textwidth]{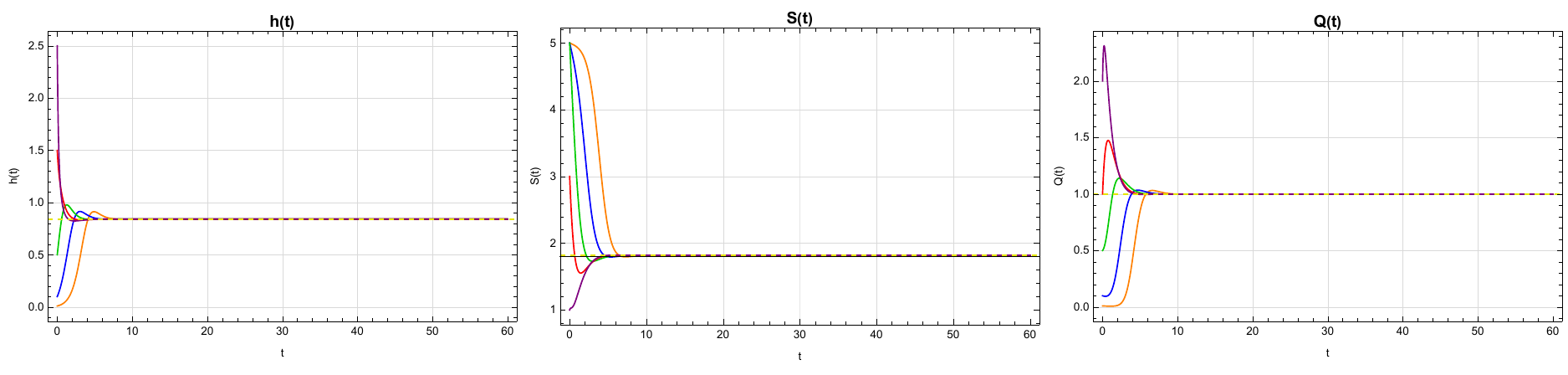}
\caption{Five trajectories from widely separated initial conditions in $(0,\infty)^3$ all converge to the unique nontrivial equilibrium $(h_\star,S_\star,Q_\star)$ (gold dashed lines). This is consistent with the boundedness of orbits in Proposition~\ref{PX} and provides numerical evidence that the locally stable equilibrium of Theorem~\ref{T6} is in fact globally attracting. Parameters  and nontrivial equilibrium as in Figure~\ref{fig:case2} with $S^*=5$.}
\label{fig:case3}
\end{figure}

\medskip

\section{Proof of Theorem~\ref{T6}}\label{App}

To complete the proof of Theorem~\ref{T6} it remains to check that all eigenvalues of the $(3\times 3)$-matrix $A=(a_{ij})_{1\le i,j\le 3}$ in \eqref{MA} have negative real parts. According to the Routh-Hurwitz-Criterion this is the case provided that
\begin{align}\label{mpos}
m_0>0\,,\quad m_1>0\,,\quad m_2>0\,,\quad  m_1m_2-m_0>0\,,
\end{align}
where
\begin{align*}
&m_0:=-\det(A)\,,\quad m_1:=-\mathrm{trace}(A)\,,\\
&m_2:=\det\left(\begin{matrix}
a_{11} & a_{12} \\
a_{21} & a_{22}
\end{matrix}\right)
+\det\left(\begin{matrix}
a_{11} & a_{13} \\
a_{31} & a_{33}
\end{matrix}\right)
+\det
\left(\begin{matrix}
a_{22} & a_{23} \\
a_{32} & a_{33}
\end{matrix}\right)\,.
\end{align*}
We check the positivity of each term in~\eqref{mpos} using that
\begin{equation}\label{pos}
H\le 0\,,\quad G>0\,,\quad \nu(S^*)\ge \nu(S_\star)\,,\quad \nu'(S_\star)\ge 0\,,\quad d'(h_\star)\ge 0
\end{equation}
due to~\eqref{GH} and the monotonicity assumptions on $\nu$ and $h$.
\\

\noindent{\bf (i)} As for $m_0$ one computes
\begin{align*}
m_0=&\Big(-a \Delta  H+ \big[\Delta- \alpha\big] d'(h_\star) \Big)h_\star D-\frac{\big[a \beta  k_1  \kappa (\alpha+k_2) -\alpha \rho \big]}{\Delta \kappa } h_\star^2 \nu'(S_\star) d(h_\star)  H\\
&+\left(\frac{\left( \rho \big[\Delta- \alpha\big]+a\beta k_1  \kappa   \nu(S_\star)\right)\left(  d(h_\star)+h_\star d'(h_\star)\right)}{ \kappa }+\frac{a b \rho \Delta }{\kappa}\right)h_\star G\\
&+\frac{\rho \big[\Delta- \alpha\big]+ a \beta k_1  \kappa \nu(S_\star) }{\Delta^{2} \kappa a}\alpha h_\star  \nu'(S_\star) d(h_\star) ^{2}+\frac{k_2}{\Delta} \beta k_1  h_\star^2   \nu'(S_\star) d'(h_\star)  d(h_\star)\\
&+\frac{ \nu'(S_\star) \alpha \rho b h_\star}{\Delta \kappa} d(h_\star) 
\end{align*} 
and thus $m_0>0$ since each term is nonnegative owing to~\eqref{pos} and using~\eqref{Delta} and~\eqref{hh3} for the terms in square brackets.\\

\noindent{\bf (ii)} As for $m_1$ one computes
\begin{align*}
m_1=-a h_\star H+\frac{\alpha d(h_\star) }{\Delta}+d'(h_\star)  h_\star+D+\frac{\beta k_1   \nu'(S_\star)  d(h_\star)  h_\star}{\Delta}+\frac{\rho h_\star }{\kappa}G+\Delta
\end{align*} 
and thus $m_1>0$ due to~\eqref{pos} and~\eqref{Delta}.\\

\noindent{\bf (iii)} As for $m_2$ one computes
\begin{align*}
m_2=& \left(-a h_\star H+\frac{\alpha d(h_\star) }{\Delta}+d'(h_\star)  h_\star+\Delta\right) D+
\left(\frac{ d(h_\star)+h_\star  d'(h_\star)+a b   + \Delta}{\kappa}\right) \rho h_\star G\\
&-\left(\frac{ \beta k_1  h_\star   \nu'(S_\star) d(h_\star) }{\Delta}+ \Delta \right)ah_\star H+\frac{\alpha \beta  k_1  h_\star  \nu'(S_\star) d(h_\star) ^{2}}{\Delta^{2}}\\
&+\frac{\beta k_1  \nu'(S_\star)   h_\star^{2} d(h_\star) d'(h_\star) }{\Delta}+\frac{(\alpha+k_2)   \beta  k_1\nu'(S_\star) h_\star d(h_\star)}{\Delta} +\big[\Delta- \alpha\big]  h_\star d'(h_\star) 
\end{align*} 
and thus $m_2>0$ since each term is nonnegative owing to~\eqref{pos} and~\eqref{Delta}.\\

\noindent{\bf (iv)} Finally,  for $m_3:=m_1m_2-m_0$ one computes
\begin{align*}
m_3=&\left(-a h_\star H+\frac{\alpha d(h_\star) }{\Delta}+d'(h_\star)  h_\star +\Delta\right) D^{2}\\
&+\left(-a h_\star H+\frac{(\Delta+  \alpha)  d(h_\star) }{\Delta }+2 h_\star  d'(h_\star) + ab +2    \Delta\right)\frac{\rho D h_\star }{\kappa}G\\
&+ a^{2}D h_\star^{2} H^{2}-2a\left(\left(\beta h_\star k_1    \nu'(S_\star)+ \alpha  \right) \frac{d(h_\star)}{\Delta} + h_\star  d'(h_\star)  + \Delta \right)h_\star D H\\
&+\Big(2 \beta  h_\star k_1   \nu'(S_\star)+\alpha\Big)\frac{\alpha D d(h_\star)^{2}}{\Delta^{2}}\\
& +\left(2\left(\beta h_\star k_1   \nu'(S_\star)+ \alpha\right) \frac{d'(h_\star)}{\Delta} +\frac{(2     \Delta+   \nu(S_\star) ) \beta k_1  \nu'(S_\star)}{\Delta}\right)  h_\star d(h_\star)D+2 \alpha  d(h_\star) D\\
& +D d'(h_\star)^{2} h_\star^{2}+2 D\Delta h_\star d'(h_\star) +D\Delta^{2}+\Big(d(h_\star) +h_\star  d'(h_\star) +ab+ \Delta\Big) \frac{\rho^{2} h_\star^2}{\kappa^{2}} G^{2}\\
&-\left(\left(\frac{\beta k_1  h_\star   \nu'(S_\star)}{\Delta}+1\right) d(h_\star) +h_\star  d'(h_\star) +ab +2 \Delta\right) \frac{a \,h_\star^{2} \rho}{\kappa} G H\\
&+\left(\frac{\beta k_1 h_\star\left( \Delta+\alpha\right)  \nu'(S_\star)}{\Delta}+\alpha\right) \frac{h_\star  \rho d(h_\star)^{2}}{\Delta \kappa } G\\
&+\Big(2 \beta k_1 h_\star   \nu'(S_\star)+ \Delta+ \alpha\Big) \frac{   \rho  h_\star^{2} d'(h_\star)d(h_\star)}{\Delta \kappa}G \\
&+\Big(2  \Delta+ab+  \nu(S_\star)\Big)  \frac{\rho \beta k_1 h_\star^2\nu'(S_\star)d(h_\star)}{\Delta \kappa} G\\
&+\Big( \left[2\alpha\rho-a\beta k_1 \kappa\nu(S_\star)\right] \Delta +\alpha\rho ab\Big) \frac{h_\star d(h_\star)}{\Delta \kappa} G+ \frac{\rho h_\star^{3}  d'(h_\star)^{2}}{\kappa}G\\
 & +\Big( \left[2\Delta\rho-a\beta k_1 \kappa\nu(S_\star)\right] +ab \rho\Big) \frac{h_\star^2 d'(h_\star) }{ \kappa}  G
+\frac{\rho  h_\star\Delta^{2}}{\kappa} G\\
&+\left(\frac{ \beta  k_1  h_\star  \nu'(S_\star) d(h_\star) }{\Delta}+\Delta\right) a^{2}h_\star^{2}   H^{2}\\
&-\Big(\beta k_1 h_\star \nu'(S_\star) +2\alpha\Big) \frac{\beta  k_1 a h_\star^{2}  \nu'(S_\star)d(h_\star) ^{2}}{\Delta^{2}} H\\
&-  \Big(2\big(h_\star    d'(h_\star) +\Delta\big)
a  \beta k_1 \kappa +\alpha\rho\Big)\frac{h_\star^2\nu'(S_\star) d(h_\star) }{\Delta\kappa} H\\
& -\left(\big[2 \Delta-\alpha\big] h_\star d'(h_\star) + \Delta^{2} +\alpha  d(h_\star)\right) a h_\star H\\
& +\big(\beta  k_1 h_\star \nu'(S_\star) +\alpha\big)  \frac{ \alpha \beta k_1  h_\star   \nu'(S_\star) d(h_\star)^{3}}{\Delta^{3}} \\
&+\big(\beta k_1 h_\star  \nu'(S_\star)+2\alpha\big)\frac{\beta k_1 h_\star^{2}   \nu'(S_\star)}{\Delta^{2}} d'(h_\star)d(h_\star)^{2} +\big(\Delta+\nu(S_\star)\big)\frac{\beta^{2}k_1^{2} h_\star^{2}  \nu'(S_\star)^{2}}{\Delta^{2}}d(h_\star)^{2}\\
&+\Big(\big[2a\beta k_1\kappa-\rho\big]\Delta+\alpha \rho\Big)\frac{\alpha  h_\star\nu'(S_\star)}{\Delta^{2} \kappa a} d(h_\star)^{2}+\frac{\beta k_1   h_\star^{3}  \nu'(S_\star) d'(h_\star) ^{2}}{\Delta} d(h_\star)\\
&+\left(2 \beta k_1  \nu'(S_\star)  \,h_\star^{2}+\big[\Delta-\alpha\big]\frac{\alpha h_\star}{\Delta }\right) d'(h_\star)d(h_\star) \\
& +\Big[\Delta  \beta  k_1  \kappa(\alpha+k_2)-\alpha b \rho\Big]\frac{ h_\star  \nu'(S_\star) d(h_\star)}{\Delta \kappa} +\big[\Delta-\alpha\big] \Big(h_\star^{2}  d'(h_\star)^{2} +\Delta h_\star d'(h_\star)\Big)
\end{align*} 
and thus $m_1m_2-m_0>0$ since each term is nonnegative, where we use~\eqref{pos} and --~for the terms in square brackets --~\eqref{hh3} and~\eqref{Delta}.

\end{appendix}

\bibliographystyle{siam} 
\bibliography{BiofilmLiterature}

\end{document}